\documentclass[12pt,reqno]{amsart}

\usepackage[pagewise]{lineno}

\usepackage{amsmath,amssymb,mathtools}
\usepackage[margin=1.1in]{geometry}
\usepackage{newtxtext,newtxmath}
\usepackage{bm}
\usepackage{microtype}
\usepackage{enumitem}
\usepackage{graphicx}
\usepackage{float}
\usepackage{booktabs}
\usepackage{xurl}
\usepackage{xcolor}
\usepackage{hyperref}
\usepackage[capitalise,nameinlink,noabbrev]{cleveref}

\definecolor{internalblue}{RGB}{0,0,0}
\definecolor{urlblue}{RGB}{0,0,0}
\hypersetup{colorlinks=true,linkcolor=internalblue,urlcolor=urlblue,citecolor=internalblue,pdfborder={0 0 0}}

\allowdisplaybreaks[2]

\makeatletter
\renewcommand{\subsection}{\@startsection{subsection}{2}%
  \z@{.5\linespacing\@plus.7\linespacing}{-.5em}%
  {\normalfont}}
\makeatother

\numberwithin{equation}{section}
\theoremstyle{plain}
\newtheorem{theorem}{Theorem}[section]
\newtheorem{lemma}[theorem]{Lemma}
\newtheorem{proposition}[theorem]{Proposition}
\newtheorem{corollary}[theorem]{Corollary}
\theoremstyle{definition}

\theoremstyle{remark}
\newtheorem{remark}[theorem]{Remark}

\crefname{equation}{equation}{equations}
\Crefname{equation}{Equation}{Equations}
\crefname{theorem}{theorem}{theorems}
\Crefname{theorem}{Theorem}{Theorems}
\crefname{lemma}{lemma}{lemmas}
\Crefname{lemma}{Lemma}{Lemmas}
\crefname{proposition}{proposition}{propositions}
\Crefname{proposition}{Proposition}{Propositions}
\crefname{corollary}{corollary}{corollaries}
\Crefname{corollary}{Corollary}{Corollaries}
\crefname{section}{section}{sections}
\Crefname{section}{Section}{Sections}
\crefname{figure}{figure}{figures}
\Crefname{figure}{Figure}{Figures}
\crefname{table}{table}{tables}
\Crefname{table}{Table}{Tables}

\graphicspath{{figures/}{./}}

\newcommand{\R}{\mathbb{R}}
\newcommand{\N}{\mathbb{N}}
\newcommand{\E}{\mathbb{E}}
\renewcommand{\P}{\mathbb{P}}

\DeclareMathOperator{\dist}{dist}
\DeclareMathOperator{\tr}{tr}
\DeclareMathOperator{\Var}{Var}

\DeclareMathOperator{\Vol}{Vol}
\newcommand{\dd}{\,d}

\newcommand{\inner}[2]{\left\langle #1,#2\right\rangle}
\newcommand{\refcite}[1]{%
  \allowbreak\hyperlink{cite.#1}{\mbox{\textcolor{internalblue}{\begin{NoHyper}\cite{#1}\end{NoHyper}}}}%
}
\newcommand{\refcitenote}[2]{%
  \allowbreak\hyperlink{cite.#2}{\mbox{\textcolor{internalblue}{\begin{NoHyper}\cite[#1]{#2}\end{NoHyper}}}}%
}

\newcommand{\midlabel}[2]{%
  \leavevmode
  \vadjust pre{%
    \vbox to 0pt{%
      \hbox to 0pt{%
        \pdfdest name {mid:#2} fitr
          width \linewidth
          height 1.5\baselineskip
          depth 1.5\baselineskip
        \hss}%
      \vss
    }%
  }%
  \label[#1]{#2}%
}
\newcommand{\iref}[1]{%
  \allowbreak\hyperlink{mid:#1}{\mbox{\textcolor{internalblue}{{\Cref*{#1}}}}}%
}

\title[Sharp Gaussian Asymptotics]
{Sharp Gaussian Asymptotics for Marginals of Euclidean Balls}

\author[B.S. Chen]{Bo-Si Chen}
\address{Bo-Si Chen \\ Department of Applied Mathematics\\National University of Tainan, 700301, Taiwan}
\email{bchen9572@gmail.com}

\author[Y.C. Huang]{Yen-Chang Huang}
\address{Yen-Chang Huang \\ Department of Applied Mathematics\\National Yang Ming Chiao Tung University, 30010, Taiwan}
\email{ychuang0802@nycu.edu.tw}

\thanks{Some of the results presented in this paper were obtained by the first author as an undergraduate through an NSTC College/University Student Research Project under the supervision of the second author. This work was supported by the National Science and Technology Council (NSTC), Taiwan, under Grant Nos.~114-2813-C-024-023-M and 115-2115-M-A49-001-MY2.}
\subjclass[2020]{Primary 52A23; Secondary 60F05, 60B10}

\keywords{Profile measures, Euclidean balls, convex body sections, Gaussian approximation, marginal densities, first-order asymptotics, total variation}

\begin{document}
\begin{abstract}
We study Gaussian approximation for probability measures obtained by
normalizing one-dimensional profile functions, with one-dimensional
marginals of Euclidean balls as the principal example. We first establish
quantitative concentration estimates near the set of maximizers.
For profiles with a unique nondegenerate maximizer, we give sufficient
conditions under which centering at the maximizer and rescaling
according to the local quadratic approximation of the logarithm of
the profile yield densities that converge in $L^1(\R)$ to the
standard Gaussian density.

We then specialize to one-dimensional marginals of Euclidean balls
in $\R^n$. Writing $N=n-1$, we consider two standardizations of the
marginal distribution: one determined by the logarithmic curvature
at the maximizer, and the other by the exact standard deviation.
For each standardization, we identify the first-order correction,
of order $N^{-1}$, to the standard Gaussian density in $L^1(\R)$.
These expansions also determine the corresponding first-order
corrections to the probabilities of symmetric intervals and the
leading terms of the total variation distances from the standard
Gaussian distribution. In particular, under exact-variance
standardization, the total variation distance is asymptotic to an
explicit positive constant times $N^{-1}$. Consequently, the
previously known $O(N^{-1})$ bound for approximation by the Gaussian
distribution with the same variance is sharp in order.
\end{abstract}
\maketitle

\vspace{-1.5\baselineskip}


\section{Introduction}
\label{sec:introduction}

The volumes of parallel sections of a convex body are directly related to the
marginal densities of the uniform probability measure on the body. Let
$\lambda_k$ denote $k$-dimensional Lebesgue measure, and let $K\subset\R^n$
be a convex body with $\lambda_n(K)=1$. Let $X$ be uniformly distributed on
$K$. If $U\subset\R^n$ is an $m$-dimensional subspace, then the orthogonal
projection of $X$ onto $U$ has density
\[
\varphi_{K,U}(x)
:=
\lambda_{n-m}\bigl(K\cap(x+U^\perp)\bigr),
\qquad x\in U,
\]
where $\lambda_{n-m}$ denotes the induced Lebesgue measure on the affine
subspace $x+U^\perp$. When $m=1$, take $U=\operatorname{span}\{u\}$ with
$u\in S^{n-1}$. Then
\[
\varphi_{K,U}(tu)
=
\varphi_{K,u}(t)
=
\lambda_{n-1}
\bigl(K\cap\{z\in\R^n:z\cdot u=t\}\bigr).
\]
Thus the one-dimensional marginal density is the section profile of $K$ in
the direction $u$.

Gaussian approximation for such marginals has been studied in several
settings. For an isotropic random vector with a log-concave density, Eldan and
Klartag obtained pointwise Gaussian estimates for projections onto most
$m$-dimensional subspaces when $m\le n^c$, where $c>0$ is an absolute
constant \refcitenote{Theorem~1}{EldanKlartag2008}. Sodin obtained
tail-sensitive Gaussian estimates for average marginals and, under additional
hypotheses, for most one-dimensional marginals \refcite{Sodin2007}.

For the Euclidean ball $B_n\subset\R^n$ with $\lambda_n(B_n)=1$, Brehm and
Voigt \refcitenote{Theorem~3.1}{BrehmVoigt2000} proved that, for
$1\le m\le n-4$ and every $m$-dimensional subspace $U$,
\begin{equation}\label{eq:intro-brehm-voigt}
\int_U
\left|
\varphi_{B_n,U}(x)-g_{s_n^2,m}(x)
\right|\dd x
\le C\frac{m}{n},
\end{equation}
where
\[
g_{s^2,m}(x)
:=
\frac{1}{(2\pi s^2)^{m/2}}
\exp\left(-\frac{|x|^2}{2s^2}\right)
\]
is the centered $m$-dimensional Gaussian density with covariance
$s^2 I_m$, $s_n^2$ is the exact marginal variance, and $C$ is an absolute
constant. For $m=1$, this gives an $O(n^{-1})$ error in $L^1$. Since the total
variation distance between two measures with densities is one half of the
$L^1$ distance between their densities, it also gives an $O(n^{-1})$ bound in
total variation.

In this paper, we fix a direction and study the corresponding one-dimensional
marginal of a Euclidean ball. Write $N=n-1$. After rescaling the marginal
coordinate so that its support is $[-\pi/2,\pi/2]$, the profile becomes
\begin{equation}
\rho_n(\theta)
=
\left(1-\frac{4\theta^2}{\pi^2}\right)^{N/2},
\qquad -\frac{\pi}{2}\le\theta\le\frac{\pi}{2}.
\label{eq:intro-ball-profile}
\end{equation}
Let $\theta$ have density proportional to $\rho_n$, and set
$W_n=\frac12+\frac{\theta}{\pi}$. A change of variables gives
\[
W_n\sim\operatorname{Beta}\left(\frac{N+2}{2},\frac{N+2}{2}\right).
\]
Several results on Gaussian approximation for beta distributions apply here.
Ouimet obtained an expansion for the ratio of a Dirichlet density to the
Gaussian density with the same mean and covariance, together with a bound in
total variation \refcitenote{Theorems~1--2}{Ouimet2022}. Returning to the
$\theta$ coordinate, the matching Gaussian variance is
$\pi^2/[4(N+3)]$. Henzi and D\"umbgen also compared symmetric beta densities
with Gaussian densities by
bounding their density ratios
\refcitenote{Section~5}{HenziDuembgenExtended2023}. Their argument uses the
general method of D\"umbgen, Samworth, and Wellner
\refcite{DuembgenSamworthWellner2021}. Two of the choices considered by Henzi
and D\"umbgen are the exact standard deviation and the scale obtained from the
quadratic term of the log density at its maximum. In our notation, these
scales are
\[
\frac{\pi}{2\sqrt{N+3}}
\qquad\text{and}\qquad
\frac{\pi}{2\sqrt N},
\]
respectively.

The preceding results provide Gaussian approximations for both rescalings.
For each rescaling, we identify the first-order correction, of order
$N^{-1}$, to the standard Gaussian density in $L^1(\R)$; see
\iref{thm:ball-first-order-density} and
\iref{thm:ball-exact-variance-first-order}. Integrating the limiting functions
over symmetric intervals gives first-order corrections to the probabilities
of these intervals. Integrating their absolute values over $\R$ and dividing
by two gives the leading terms of the total variation distances; see
\Cref{thm:ball-sharp-TV} and
\iref{cor:ball-exact-variance-consequences}.

We first establish a general approximation result for a sequence of continuous
profiles $\rho_n$ on a fixed interval.
\iref{prop:tail-to-core} gives an upper bound for the probability outside a
neighborhood of the set of maximizers.
\iref{prop:fixed-scale-event-comparison} then compares the probability of that
neighborhood under the normalized profile with the probability that
$\Theta_n(X_n)$ lies in the same neighborhood, where
$X_n\sim N(\mu_n,\Sigma_n)$. When $\rho_n$ has a unique nondegenerate
maximizer $\alpha_n$, we set
\[
\tau_n=[-(\log\rho_n)''(\alpha_n)]^{-1/2}.
\]
This is the scale suggested by the second-order Taylor expansion of
$\log\rho_n$ at $\alpha_n$. We center at $\alpha_n$ and introduce the variable
$y=(\theta-\alpha_n)/\tau_n$. Under the assumptions of
\iref{thm:local-gaussian-approximation}, the density of $y$ converges in
$L^1(\R)$ to $\varphi(y)=(2\pi)^{-1/2}e^{-y^2/2}$. The proof combines a
Taylor expansion on bounded intervals with an estimate of the integral over
$|y|>M$. This is the usual argument in Laplace's method.

We next apply this result to the profile of the Euclidean ball
\eqref{eq:intro-ball-profile}. Here $\alpha_n=0$ and
$\tau_n=\pi/(2\sqrt N)$. After centering at zero and rescaling by $\tau_n$,
we obtain the density
\[
p_n(y)=\frac1{D_N}
\begin{cases}
\displaystyle\left(1-\frac{y^2}{N}\right)^{N/2},
& |y|\le\sqrt N,\\[3pt]
0, & |y|>\sqrt N,
\end{cases}
\]
where $D_N$ is the normalizing constant. We prove
\begin{equation}
N(p_n-\varphi)
\longrightarrow
Q(y):=\frac{3-y^4}{4}\varphi(y)
\quad\text{in }L^1(\R),
\label{eq:intro-first-order-density}
\end{equation}
or, equivalently,
\[
\|p_n-\varphi-N^{-1}Q\|_{L^1(\R)}=o(N^{-1}).
\]
It follows that
\begin{equation}
N d_{\rm TV}(p_n,\varphi)
\longrightarrow
C_{\rm TV}:=\frac{(a^3+3a)\varphi(a)}2
\approx0.522516163,
\qquad a=3^{1/4}.
\label{eq:intro-sharp-tv}
\end{equation}
For each fixed $x\ge0$, integrating $Q$ over $[-x,x]$ shows that
$H(x):=\frac{x^3+3x}{2}\varphi(x)$ is the first-order correction to the
Gaussian probability of this interval. Moreover, the local expansion and the
monotonicity of $p_n/\varphi$ show that the equation
$p_n(y)=\varphi(y)$ has a unique positive solution $r_n$ and that $r_n\to a$.

The second rescaling uses the exact standard deviation of the marginal
distribution and therefore gives the comparison with the Gaussian
distribution of the same variance in \eqref{eq:intro-brehm-voigt}. Set
\[
\kappa_n=\sqrt{\frac{N}{N+3}},
\qquad
\widehat p_n(z)=\kappa_np_n(\kappa_nz).
\]
Then \iref{thm:ball-exact-variance-first-order} and
\iref{cor:ball-exact-variance-consequences} give
\begin{align}
N(\widehat p_n-\varphi)
&\longrightarrow
\widehat Q(z):=-\frac{z^4-6z^2+3}{4}\varphi(z)
&&\text{in }L^1(\R),
\label{eq:intro-exact-variance-first-order}\\
N d_{\rm TV}(\widehat p_n,\varphi)
&\longrightarrow
\widehat C_{\rm TV},
\notag\\
\widehat C_{\rm TV}
&:=\frac{\sqrt6}{2}
\bigl[b_-\varphi(b_-)+b_+\varphi(b_+)\bigr]
\approx0.350075038,
\label{eq:intro-exact-variance-tv}
\end{align}
where $b_\pm=\sqrt{3\pm\sqrt6}$. For each fixed $x\ge0$, integrating
$\widehat Q$ over $[-x,x]$ shows that
$\widehat H(x)=\frac{x^3-3x}{2}\varphi(x)$ is the first-order correction to
the Gaussian probability of this interval.

The two correction functions can also be expressed in terms of Hermite
polynomials. Let
\[
H_2(t)=t^2-1,
\qquad
H_4(t)=t^4-6t^2+3
\]
be the probabilists' Hermite polynomials of degrees two and four. Then
\[
Q=-\frac14H_4\varphi-\frac32H_2\varphi,
\qquad
\widehat Q=-\frac14H_4\varphi.
\]
Thus, rescaling by the exact standard deviation removes the $H_2\varphi$ term
from the first-order correction.

The remainder of the paper is organized as follows.
\Cref{sec:fixed} proves a concentration estimate for normalized profile
functions. \Cref{sec:gaussian} studies the same concentration event when
$X_n\sim N(\mu_n,\Sigma_n)$. \Cref{sec:local} proves $L^1$ convergence after
centering and rescaling at the maximizer. \Cref{sec:ball} applies these results
to one-dimensional marginals of Euclidean balls and derives the first-order
expansions for the two rescalings.


\section{Profile functions and fixed-scale concentration}
\label{sec:fixed}

Let \(I=[a,b]\subset\R\) be fixed. For each \(n\in\N\), let
\(\rho_n:I\to[0,\infty)\) be continuous and satisfy
\(0<Z_n:=\int_I\rho_n(\theta)\dd\theta<\infty\). Define the Borel
probability measure \(\nu_n(B):=Z_n^{-1}\int_{B\cap I}\rho_n(\theta)\dd\theta\), \(B\in\mathcal B(\R)\), supported on \(I\).
Set \(m_n:=\max_I\rho_n\) and
\(A_n:=\{\theta\in I:\rho_n(\theta)=m_n\}\). Then \(m_n>0\), and
\(A_n\) is nonempty and compact.

For \(\delta\ge0\), put
\[
\widetilde U_\delta(A_n)
:=\{\theta\in\R:\dist(\theta,A_n)<\delta\},
\qquad
U_\delta^I(A_n):=I\cap\widetilde U_\delta(A_n),
\]
and define
\begin{equation}
f_n(\delta)
:=\nu_n\bigl(\widetilde U_\delta(A_n)\bigr)
=\nu_n\bigl(U_\delta^I(A_n)\bigr)
=\frac{\int_{U_\delta^I(A_n)}\rho_n(\theta)\dd\theta}
{\int_I\rho_n(\theta)\dd\theta}.
\label{eq:profile-fn}
\end{equation}
In particular, \(\widetilde U_0(A_n)=U_0^I(A_n)=\varnothing\) and
\(f_n(0)=0\).

For \(\delta>0\) and \(c\in(0,1)\), set
\[
L_{n,\delta}^{(c)}
:=\{\theta\in U_\delta^I(A_n):\rho_n(\theta)\ge cm_n\},
\qquad
T_{n,\delta}:=I\setminus U_\delta^I(A_n).
\]
If \(T_{n,\delta}=\varnothing\), then \(f_n(\delta)=1\), and all tail
quantities below are understood to be zero.

\begin{proposition}
\midlabel{proposition}{prop:tail-to-core}
For every \(n\in\N\), \(\delta>0\), and \(c\in(0,1)\), one has
\(|L_{n,\delta}^{(c)}|>0\) and
\begin{equation}
1-f_n(\delta)
\le
\frac{|T_{n,\delta}|}{c|L_{n,\delta}^{(c)}|}
\sup_{\theta\in T_{n,\delta}}\frac{\rho_n(\theta)}{m_n}
\le
\frac{|I|}{c|L_{n,\delta}^{(c)}|}
\sup_{\theta\in T_{n,\delta}}\frac{\rho_n(\theta)}{m_n}.
\label{eq:tail-to-core-estimate}
\end{equation}
\end{proposition}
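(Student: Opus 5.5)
We first show that \(|L_{n,\delta}^{(c)}|>0\), then compare the tail mass with the core mass. For positivity, pick any \(\alpha\in A_n\), so \(\rho_n(\alpha)=m_n>cm_n\). Since \(\rho_n\) is continuous on \(I\), there is \(\eta\in(0,\delta)\) such that \(\rho_n(\theta)> cm_n\) for all \(\theta\in I\) with \(|\theta-\alpha|<\eta\). Every such \(\theta\) satisfies \(\dist(\theta,A_n)\le|\theta-\alpha|<\delta\), so the set \(I\cap(\alpha-\eta,\alpha+\eta)\) lies in \(L_{n,\delta}^{(c)}\). Because \(I=[a,b]\) is a nondegenerate interval, the intersection of \(I\) with an open interval centred at a point of \(I\) has positive length, at least \(\min(\eta,b-a)\). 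Hence \(|L_{n,\delta}^{(c)}|>0\). Measurability is not an issue: \(U^I_\delta(A_n)\) is relatively open in \(I\), and \(\{\rho_n\ge cm_n\}\) is closed.

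For the main estimate, if \(T_{n,\delta}=\varnothing\) both sides vanish by convention, so assume \(T_{n,\delta}\neq\varnothing\). Since \(I\) is the disjoint union of \(U_\delta^I(A_n)\) and \(T_{n,\delta}\),
\[
1-f_n(\delta)=\frac{\int_{T_{n,\delta}}\rho_n}{Z_n}.
\]
We bound the numerator and denominator separately. For the numerator,
\[
\int_{T_{n,\delta}}\rho_n\le |T_{n,\delta}|\, m_n\sup_{T_{n,\delta}}\frac{\rho_n}{m_n}.
\]
For the denominator, nonnegativity of \(\rho_n\) and \(L_{n,\delta}^{(c)}\subset I\) give
\[
Z_n\ge\int_{L_{n,\delta}^{(c)}}\rho_n\ge cm_n|L_{n,\delta}^{(c)}|>0.
\]
Dividing the first bound by the second cancels \(m_n\) and yields the first inequality in \eqref{eq:tail-to-core-estimate}. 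The second inequality follows from \(|T_{n,\delta}|\le|I|\).

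No step is a genuine obstacle. The only point needing care is the positivity of \(|L^{(c)}_{n,\delta}|\) when the maximizer is an endpoint of \(I\). This is handled by intersecting the neighbourhood with \(I\) and using that \(I\) has nonempty interior.
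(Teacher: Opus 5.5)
Your proof is correct and follows essentially the same route as the paper: a continuity neighbourhood of a maximizer, with radius below $\delta$ and intersected with $I$, gives $|L_{n,\delta}^{(c)}|>0$, and the core-mass lower bound $c\,m_n|L_{n,\delta}^{(c)}|$ together with the tail-height upper bound gives the estimate. The only cosmetic difference is that you bound $Z_n\ge\int_{L_{n,\delta}^{(c)}}\rho_n$ directly, whereas the paper bounds the tail-to-core ratio $r_{n,\delta}$ and then uses $1-f_n(\delta)=r_{n,\delta}/(1+r_{n,\delta})\le r_{n,\delta}$.
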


\begin{proof}
Choose \(\alpha_n\in A_n\). Since \(\rho_n(\alpha_n)=m_n>cm_n\),
continuity gives \(r_n>0\) such that \(\theta\in I\) and
\(|\theta-\alpha_n|<r_n\) imply \(\rho_n(\theta)>cm_n\). Replacing
\(r_n\) by \(\min\{r_n,\delta/2\}\), we obtain
\((\alpha_n-r_n,\alpha_n+r_n)\cap I\subset L_{n,\delta}^{(c)}\),
because \(\dist(\theta,A_n)\le|\theta-\alpha_n|<\delta\) there. Hence \(|L_{n,\delta}^{(c)}|>0\).

If \(T_{n,\delta}=\varnothing\), the result follows from the convention
above. Otherwise,
\[
\int_{U_\delta^I(A_n)}\rho_n(\theta)\dd\theta
\ge cm_n|L_{n,\delta}^{(c)}|,
\qquad
\int_{T_{n,\delta}}\rho_n(\theta)\dd\theta
\le |T_{n,\delta}|\sup_{\theta\in T_{n,\delta}}\rho_n(\theta).
\]
Therefore the tail-to-core mass ratio satisfies
\begin{equation}
r_{n,\delta}
:=\frac{\int_{T_{n,\delta}}\rho_n(\theta)\dd\theta}
{\int_{U_\delta^I(A_n)}\rho_n(\theta)\dd\theta}
\le
\frac{|T_{n,\delta}|}{c|L_{n,\delta}^{(c)}|}
\sup_{\theta\in T_{n,\delta}}\frac{\rho_n(\theta)}{m_n}.
\label{eq:tail-mass-ratio}
\end{equation}
Since \(I=U_\delta^I(A_n)\,\dot\cup\,T_{n,\delta}\),
\[
1-f_n(\delta)
=\frac{\int_{T_{n,\delta}}\rho_n(\theta)\dd\theta}
{\int_{U_\delta^I(A_n)}\rho_n(\theta)\dd\theta
+\int_{T_{n,\delta}}\rho_n(\theta)\dd\theta}
=\frac{r_{n,\delta}}{1+r_{n,\delta}}
\le r_{n,\delta}.
\]
The first inequality follows from \eqref{eq:tail-mass-ratio}, and the
second follows from \(|T_{n,\delta}|\le|I|\).
\end{proof}

For \(\delta>0\) and \(c\in(0,1)\), define
\[
\eta_{n,\delta}^{(c)}
:=\frac{|T_{n,\delta}|}{c|L_{n,\delta}^{(c)}|}
\sup_{\theta\in T_{n,\delta}}\frac{\rho_n(\theta)}{m_n}.
\]
Then
\begin{equation}
0\le1-f_n(\delta)\le\eta_{n,\delta}^{(c)}.
\label{eq:fn-eta-bound}
\end{equation}

\begin{corollary}
\midlabel{corollary}{cor:fixed-scale-concentration}
Fix \(c\in(0,1)\). If \(\eta_{n,\delta}^{(c)}\to0\) for every fixed
\(\delta>0\), then \(f_n(\delta)\to1\) for every fixed \(\delta>0\).
\end{corollary}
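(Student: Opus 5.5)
This is an immediate squeeze argument based on \eqref{eq:fn-eta-bound}, which itself rests on \iref{prop:tail-to-core}. Fix \(c\in(0,1)\) and \(\delta>0\). For every \(n\), \iref{prop:tail-to-core} gives \(|L_{n,\delta}^{(c)}|>0\), so \(\eta_{n,\delta}^{(c)}\) is a well-defined element of \([0,\infty)\) with no division by zero. Inequality \eqref{eq:fn-eta-bound} then yields
\[
0\le 1-f_n(\delta)\le \eta_{n,\delta}^{(c)}.
\]

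By hypothesis \(\eta_{n,\delta}^{(c)}\to0\) as \(n\to\infty\) for this fixed \(\delta\). The squeeze theorem therefore gives \(1-f_n(\delta)\to0\), that is, \(f_n(\delta)\to1\). Since \(\delta>0\) was arbitrary, the conclusion holds for every fixed \(\delta>0\).

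The only bookkeeping point is the degenerate case \(T_{n,\delta}=\varnothing\). There the stated convention sets the tail quantities, and hence \(\eta_{n,\delta}^{(c)}\), to zero, and \(f_n(\delta)=1\) exactly. So \eqref{eq:fn-eta-bound} still holds and the argument is unaffected. There is no genuine obstacle: all the analytic content already sits in \iref{prop:tail-to-core}, and the corollary is a direct limit statement drawn from it.
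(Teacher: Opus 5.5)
Your proof is correct and is exactly the paper's argument: fix $\delta>0$, use \eqref{eq:fn-eta-bound} to get $0\le 1-f_n(\delta)\le\eta_{n,\delta}^{(c)}\to0$, and conclude by squeezing. Your extra remark on the degenerate case $T_{n,\delta}=\varnothing$ agrees with the paper's stated convention and does not change the argument.
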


\begin{proof}
For fixed \(\delta>0\), \(0\le1-f_n(\delta)\le\eta_{n,\delta}^{(c)}\to0\) and the result follows.
\end{proof}

\begin{corollary}
\label{cor:fixed-scale-simple}
Fix \(c\in(0,1)\). If, for every fixed \(\delta>0\),
\begin{equation}
\sup_{\theta\in T_{n,\delta}}\frac{\rho_n(\theta)}{m_n}
\longrightarrow0,
\qquad
\liminf_{n\to\infty}|L_{n,\delta}^{(c)}|>0,
\label{eq:fixed-scale-sufficient}
\end{equation}
then \(f_n(\delta)\to1\) for every fixed \(\delta>0\).
\end{corollary}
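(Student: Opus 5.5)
The idea is to reduce this corollary to \iref{cor:fixed-scale-concentration}: under \eqref{eq:fixed-scale-sufficient} I will show that $\eta_{n,\delta}^{(c)}\to0$ for every fixed $\delta>0$. Alternatively, I can apply \eqref{eq:fn-eta-bound} directly.

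Fix $\delta>0$ and set $\ell:=\liminf_{n\to\infty}|L_{n,\delta}^{(c)}|>0$. By definition of the liminf there is $n_0$ such that $|L_{n,\delta}^{(c)}|\ge \ell/2$ for all $n\ge n_0$. We also have $|T_{n,\delta}|\le|I|=b-a$, a fixed finite number. Hence, for $n\ge n_0$,
\[
0\le\eta_{n,\delta}^{(c)}\le\frac{2|I|}{c\,\ell}\,\sup_{\theta\in T_{n,\delta}}\frac{\rho_n(\theta)}{m_n}.
\]
Here I use the convention that every tail quantity is zero when $T_{n,\delta}=\varnothing$.

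The prefactor $2|I|/(c\ell)$ does not depend on $n$. By the first condition in \eqref{eq:fixed-scale-sufficient}, the supremum tends to $0$, so $\eta_{n,\delta}^{(c)}\to0$. Since $\delta>0$ was arbitrary, the hypothesis of \iref{cor:fixed-scale-concentration} holds, and therefore $f_n(\delta)\to1$ for every fixed $\delta>0$.

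None of these steps is a real obstacle. The only points needing care are that $\ell>0$ is what lets the denominator be bounded uniformly away from zero for all large $n$, and that the empty-tail convention covers the indices where $T_{n,\delta}=\varnothing$.
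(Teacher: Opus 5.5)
Your proposal is correct and matches the paper's proof. The paper also fixes \(\delta\), chooses a positive constant \(\ell_\delta\) below the liminf, bounds \(\eta_{n,\delta}^{(c)}\) by \(|I|/(c\ell_\delta)\) times the tail-height supremum for large \(n\), and applies the preceding corollary. Your choice \(\ell/2\) is legitimate because \(|L_{n,\delta}^{(c)}|\le|I|\) keeps the liminf finite.
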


\begin{proof}
Fix \(\delta>0\) and choose \(0<\ell_\delta<\liminf_{n\to\infty}|L_{n,\delta}^{(c)}|\). Then \(|L_{n,\delta}^{(c)}|\ge\ell_\delta\) for all sufficiently large \(n\), and hence
\[
0\le\eta_{n,\delta}^{(c)}
\le\frac{|I|}{c\ell_\delta}
\sup_{\theta\in T_{n,\delta}}\frac{\rho_n(\theta)}{m_n}
\longrightarrow0.
\]
The result follows from
\hyperlink{mid:cor:fixed-scale-concentration}{\mbox{\textcolor{internalblue}{\textbf{Corollary~\ref*{cor:fixed-scale-concentration}}}}}.
\end{proof}

\begin{remark}
Decay of \(\sup_{\theta\in T_{n,\delta}}\rho_n(\theta)/m_n\) alone is
insufficient: the factor \(|L_{n,\delta}^{(c)}|^{-1}\) in
\(\eta_{n,\delta}^{(c)}\) may diverge. Thus the tail-height decay must
be accompanied by quantitative control of the core width.
\end{remark}

Because \(\nu_n\) is supported on \(I\), \Cref{sec:gaussian} can use the same ambient neighborhood \(\widetilde U_\delta(A_n)\) for the Gaussian law; density shape is treated at the shrinking scale in \Cref{sec:local}.


\section{Fixed-scale comparison of concentration events}
\label{sec:gaussian}

Let \(X_n\sim N(\mu_n,\Sigma_n)\), where \(\mu_n\in\R^n\) and
\(\Sigma_n\) is symmetric positive semidefinite. Let
\(\Theta_n:\R^n\to\R\) be \(C^1\) with
\(K_n:=\sup_{z\in\R^n}\|\nabla\Theta_n(z)\|<\infty\).
For \(\delta\ge0\), define
\begin{equation}
g_n(\delta):=\P\bigl(\Theta_n(X_n)\in\widetilde U_\delta(A_n)\bigr)
=\P\bigl(\dist(\Theta_n(X_n),A_n)<\delta\bigr).
\label{eq:gaussian-gn}
\end{equation}
In particular, \(g_n(0)=0\). Set
\(d_n:=\dist(\Theta_n(\mu_n),A_n)\).

\subsection{A Gaussian observation bound}

\begin{lemma}
\label{lem:gaussian-distance}
For every \(x\in\R^n\),
\begin{equation}
\dist(\Theta_n(x),A_n)\le K_n\|x-\mu_n\|+d_n.
\label{eq:gaussian-pointwise-distance}
\end{equation}
Consequently,
\begin{equation}
\left[\E\dist(\Theta_n(X_n),A_n)^2\right]^{1/2}
\le K_n\sqrt{\tr(\Sigma_n)}+d_n,
\label{eq:gaussian-L2-distance}
\end{equation}
and
\begin{equation}
\E\dist(\Theta_n(X_n),A_n)^2
\le\left(K_n\sqrt{\tr(\Sigma_n)}+d_n\right)^2.
\label{eq:gaussian-distance-square-bound}
\end{equation}
\end{lemma}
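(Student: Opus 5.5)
The pointwise bound \eqref{eq:gaussian-pointwise-distance} comes from combining the mean value theorem with the fact that $\dist(\cdot,A_n)$ is $1$-Lipschitz on $\R$. First I will show that $\Theta_n$ is $K_n$-Lipschitz. For $x\in\R^n$, the function $t\mapsto\Theta_n(\mu_n+t(x-\mu_n))$ is $C^1$ on $[0,1]$. The fundamental theorem of calculus and Cauchy--Schwarz then give
\[
|\Theta_n(x)-\Theta_n(\mu_n)|\le\sup_{z}\|\nabla\Theta_n(z)\|\,\|x-\mu_n\|=K_n\|x-\mu_n\|.
\]
Next, since $A_n$ is nonempty and compact, for any $s,t\in\R$ we have $\dist(s,A_n)\le|s-t|+\dist(t,A_n)$. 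To see this, pick $\alpha\in A_n$ attaining $\dist(t,A_n)$ and apply the triangle inequality. Taking $s=\Theta_n(x)$ and $t=\Theta_n(\mu_n)$ gives $\dist(\Theta_n(x),A_n)\le K_n\|x-\mu_n\|+d_n$.

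For \eqref{eq:gaussian-L2-distance}, I apply the pointwise bound at $x=X_n$. Both sides are nonnegative random variables, so the $L^2(\P)$ norm is monotone and Minkowski's inequality applies:
\[
\bigl[\E\dist(\Theta_n(X_n),A_n)^2\bigr]^{1/2}\le K_n\bigl[\E\|X_n-\mu_n\|^2\bigr]^{1/2}+d_n.
\]
The constant $d_n$ has $L^2$ norm $d_n$ because $\P$ is a probability measure. It then remains to identify $\E\|X_n-\mu_n\|^2$. By linearity, $\E\|X_n-\mu_n\|^2=\sum_i\Var((X_n)_i)=\tr(\Sigma_n)$, and this holds even when $\Sigma_n$ is degenerate.

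Measurability needs a brief justification. The map $x\mapsto\dist(\Theta_n(x),A_n)$ is continuous, so the expectation is well defined. It is also finite by the bound just proved, since Gaussian vectors have finite second moments.

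Finally, \eqref{eq:gaussian-distance-square-bound} follows by squaring \eqref{eq:gaussian-L2-distance}, which is allowed because both sides are nonnegative. None of these steps is a real obstacle. The only point that needs care is the Lipschitz estimate for $\Theta_n$: it uses the global supremum $K_n$ of $\|\nabla\Theta_n\|$ along the whole segment, which is why the hypothesis $K_n<\infty$ is imposed on all of $\R^n$.
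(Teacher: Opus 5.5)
Your proposal is correct and follows essentially the same route as the paper: the $K_n$-Lipschitz bound along the segment, a nearest point $\alpha\in A_n$ to $\Theta_n(\mu_n)$ (the paper's $\alpha_n^*$), then Minkowski in $L^2$ with $\E\|X_n-\mu_n\|^2=\tr(\Sigma_n)$, and squaring. Your added remarks on measurability and on the degenerate-covariance case are harmless refinements of the same argument.
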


\begin{proof}
The gradient bound and the line-segment Fundamental Theorem of Calculus give $|\Theta_n(x)-\Theta_n(y)|\le K_n\|x-y\|$. Choose $\alpha_n^*\in A_n$ with $d_n=|\Theta_n(\mu_n)-\alpha_n^*|$. Then
\[
 \dist(\Theta_n(x),A_n)\le |\Theta_n(x)-\alpha_n^*|
 \le K_n\|x-\mu_n\|+d_n.
\]
Since $\E\|X_n-\mu_n\|^2=\tr(\Sigma_n)$, Minkowski's inequality in $L^2$ gives \eqref{eq:gaussian-L2-distance}; squaring proves \eqref{eq:gaussian-distance-square-bound}.
\end{proof}

\begin{proposition}
\midlabel{proposition}{prop:gaussian-observation-estimate}
For every \(\delta>0\),
\begin{equation}
1-g_n(\delta)
\le\frac{\left(K_n\sqrt{\tr(\Sigma_n)}+d_n\right)^2}{\delta^2}.
\label{eq:gaussian-observation-estimate}
\end{equation}
\end{proposition}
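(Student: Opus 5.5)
The plan is to combine Chebyshev's (Markov's) inequality with the second-moment bound of \Cref{lem:gaussian-distance}. By the definition \eqref{eq:gaussian-gn},
\[
1-g_n(\delta)=\P\bigl(\dist(\Theta_n(X_n),A_n)\ge\delta\bigr).
\]
So it suffices to bound the probability that the nonnegative random variable \(D_n:=\dist(\Theta_n(X_n),A_n)\) is at least \(\delta\).

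First, \(D_n\) is a legitimate random variable. The map \(x\mapsto\dist(\Theta_n(x),A_n)\) is continuous, being a composition of the Lipschitz map \(\Theta_n\) with the \(1\)-Lipschitz function \(\dist(\cdot,A_n)\). It is therefore Borel measurable, so \(D_n\) is measurable.

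Next, since \(\delta>0\), we have \(\mathbf 1_{\{D_n\ge\delta\}}\le D_n^2/\delta^2\) pointwise. Taking expectations gives
\[
\P(D_n\ge\delta)\le \frac{\E D_n^2}{\delta^2}.
\]
Inserting \eqref{eq:gaussian-distance-square-bound} yields \eqref{eq:gaussian-observation-estimate} immediately.

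There is no real obstacle; the only point to check is that all quantities are finite. \(K_n<\infty\) holds by assumption, and \(\tr(\Sigma_n)<\infty\) holds because \(\Sigma_n\) is a finite matrix, so the right-hand side is a finite bound. The degenerate case \(\Sigma_n=0\) is also covered, since \(D_n=d_n\) almost surely and the inequality still holds.
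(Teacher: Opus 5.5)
Your proposal is correct and matches the paper's proof: the paper also applies Markov's inequality to $\dist(\Theta_n(X_n),A_n)^2$ and then inserts the second-moment bound \eqref{eq:gaussian-distance-square-bound} from \Cref{lem:gaussian-distance}. Your measurability and finiteness remarks are extra details the paper leaves implicit.
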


\begin{proof}
By Markov's inequality and \eqref{eq:gaussian-distance-square-bound},
\begin{align*}
1-g_n(\delta)
&=\P\bigl(\dist(\Theta_n(X_n),A_n)^2\ge\delta^2\bigr)\\
&\le\frac{\E\dist(\Theta_n(X_n),A_n)^2}{\delta^2}
\le\frac{\left(K_n\sqrt{\tr(\Sigma_n)}+d_n\right)^2}{\delta^2}.\qedhere
\end{align*}
\end{proof}

\begin{corollary}
\label{cor:gaussian-fixed-concentration}
If \(K_n^2\tr(\Sigma_n)\to0\) and \(d_n\to0\), then \(g_n(\delta)\to1\) for every fixed \(\delta>0\).
\end{corollary}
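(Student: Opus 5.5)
The plan is to apply \Cref{prop:gaussian-observation-estimate} directly and pass to the limit, as was done for \Cref{cor:fixed-scale-concentration}. Fix \(\delta>0\). Since \(g_n(\delta)\) is a probability, we have \(0\le 1-g_n(\delta)\). Combined with \eqref{eq:gaussian-observation-estimate}, this gives
\[
0\le 1-g_n(\delta)\le\frac{\left(K_n\sqrt{\tr(\Sigma_n)}+d_n\right)^2}{\delta^2}.
\]
It therefore suffices to show that the numerator tends to zero.

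For the numerator, note that \(K_n\ge0\) and \(\tr(\Sigma_n)\ge0\), the latter because \(\Sigma_n\) is positive semidefinite. Hence \(K_n\sqrt{\tr(\Sigma_n)}=\sqrt{K_n^2\tr(\Sigma_n)}\), and this tends to \(0\) by the hypothesis \(K_n^2\tr(\Sigma_n)\to0\) and continuity of the square root at \(0\). Since also \(d_n\to0\), the sum \(K_n\sqrt{\tr(\Sigma_n)}+d_n\) tends to \(0\), and so does its square. Dividing by the fixed constant \(\delta^2>0\) and applying the squeeze theorem gives \(1-g_n(\delta)\to0\), that is, \(g_n(\delta)\to1\).

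No step here is a real obstacle. The only points to check are that \(\tr(\Sigma_n)\ge0\), so that \(\sqrt{K_n^2\tr(\Sigma_n)}=K_n\sqrt{\tr(\Sigma_n)}\), and that \(\delta\) is held fixed while \(n\to\infty\).
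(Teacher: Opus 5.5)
Your proposal is correct and matches the paper's proof. Both bound $1-g_n(\delta)$ using the Markov estimate of the preceding proposition and then observe that $K_n\sqrt{\tr(\Sigma_n)}+d_n=\sqrt{K_n^2\tr(\Sigma_n)}+d_n\to0$; your checks that $\tr(\Sigma_n)\ge0$ and $g_n(\delta)\le1$ simply spell out details the paper leaves implicit.
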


\begin{proof}
Since $K_n\sqrt{\tr(\Sigma_n)}+d_n=\sqrt{K_n^2\tr(\Sigma_n)}+d_n\to0$, the result follows from \iref{prop:gaussian-observation-estimate}.
\end{proof}

\subsection{Event-level comparison with the profile mass}

\begin{proposition}
\midlabel{proposition}{prop:fixed-scale-event-comparison}
Fix \(c\in(0,1)\). For every \(\delta>0\),
\begin{equation}
|f_n(\delta)-g_n(\delta)|
\le\eta_{n,\delta}^{(c)}
+\frac{\left(K_n\sqrt{\tr(\Sigma_n)}+d_n\right)^2}{\delta^2}.
\label{eq:gaussian-comparison-bound}
\end{equation}
Consequently, if \(\eta_{n,\delta}^{(c)}\to0\) for every fixed \(\delta>0\), \(K_n^2\tr(\Sigma_n)\to0\), and \(d_n\to0\), then \(|f_n(\delta)-g_n(\delta)|\to0\) for every fixed \(\delta>0\).
\end{proposition}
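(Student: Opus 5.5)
The plan is to compare both quantities with the common value \(1\). Both \(f_n(\delta)\) and \(g_n(\delta)\) are probabilities, so they lie in \([0,1]\). The triangle inequality then gives
\[
|f_n(\delta)-g_n(\delta)|
\le|1-f_n(\delta)|+|1-g_n(\delta)|
=\bigl(1-f_n(\delta)\bigr)+\bigl(1-g_n(\delta)\bigr).
\]
This reduces the comparison to two separate tail estimates, each of which has already been proved.

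For the first term, I would invoke \eqref{eq:fn-eta-bound}, the consequence of \iref{prop:tail-to-core}, which gives \(0\le1-f_n(\delta)\le\eta_{n,\delta}^{(c)}\) for the fixed \(c\in(0,1)\). This covers the degenerate case \(T_{n,\delta}=\varnothing\) through the stated convention. For the second term, I would invoke \iref{prop:gaussian-observation-estimate}, which bounds \(1-g_n(\delta)\) by \((K_n\sqrt{\tr(\Sigma_n)}+d_n)^2/\delta^2\). Adding the two bounds yields \eqref{eq:gaussian-comparison-bound}.

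For the consequence, fix \(\delta>0\). The first term tends to \(0\) by hypothesis. Since \(K_n\sqrt{\tr(\Sigma_n)}=\sqrt{K_n^2\tr(\Sigma_n)}\to0\) and \(d_n\to0\), the second term tends to \(0\) because \(\delta\) is fixed. Hence \(|f_n(\delta)-g_n(\delta)|\to0\).

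There is no real obstacle here. The only point to check is that both tail quantities are nonnegative, so the absolute values can be dropped. This holds because \(f_n(\delta)\) and \(g_n(\delta)\) are probabilities of events.
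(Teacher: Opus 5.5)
Your proposal is correct and is essentially the paper's own proof. Like the paper, you use \(0\le f_n(\delta),g_n(\delta)\le 1\) to bound \(|f_n(\delta)-g_n(\delta)|\) by \((1-f_n(\delta))+(1-g_n(\delta))\), control the two terms by the \(\eta_{n,\delta}^{(c)}\) tail bound and the Markov-type Gaussian observation estimate, and obtain the limit from \(K_n\sqrt{\tr(\Sigma_n)}=\sqrt{K_n^2\tr(\Sigma_n)}\to0\) and \(d_n\to0\) with \(\delta\) fixed.
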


\begin{proof}
Since \(0\le f_n(\delta),g_n(\delta)\le1\),
\[
|f_n(\delta)-g_n(\delta)|
\le
(1-f_n(\delta))+(1-g_n(\delta)).
\]
Using \eqref{eq:fn-eta-bound} and
\iref{prop:gaussian-observation-estimate}, we obtain
\[
|f_n(\delta)-g_n(\delta)|
\le
\eta_{n,\delta}^{(c)}
+
\frac{\left(K_n\sqrt{\tr(\Sigma_n)}+d_n\right)^2}{\delta^2},
\]
which proves \eqref{eq:gaussian-comparison-bound}.

Now suppose that
\(\eta_{n,\delta}^{(c)}\to0\) for every fixed \(\delta>0\),
\(K_n^2\tr(\Sigma_n)\to0\), and \(d_n\to0\). Since
\[
K_n\sqrt{\tr(\Sigma_n)}
=
\sqrt{K_n^2\tr(\Sigma_n)}
\to0,
\]
the second term on the right-hand side of
\eqref{eq:gaussian-comparison-bound} tends to zero for every fixed
\(\delta>0\). Hence
$|f_n(\delta)-g_n(\delta)|\to0$, as claimed.
\end{proof}

\begin{remark}
Since $\nu_n$ is supported on $I$, the profile probability
$f_n(\delta)$ is unchanged if $\widetilde U_\delta(A_n)$ is replaced by
$U_\delta^I(A_n)$. For the Gaussian observation this need not be true.
If
\[
g_n^I(\delta)
:=
\mathbb P\!\left(
\Theta_n(X_n)\in U_\delta^I(A_n)
\right),
\]
then
\[
0\le g_n(\delta)-g_n^I(\delta)
=
\mathbb P\!\left(
\Theta_n(X_n)\in
\widetilde U_\delta(A_n)\setminus I
\right)
=:\ell_{n,\delta}.
\]
Hence
\[
|f_n(\delta)-g_n^I(\delta)|
\le
|f_n(\delta)-g_n(\delta)|+\ell_{n,\delta}.
\]
Therefore, under the hypotheses of
\iref{prop:fixed-scale-event-comparison}, if
$\ell_{n,\delta}\to0$ for every fixed $\delta>0$, then
$|f_n(\delta)-g_n^I(\delta)|\to0$. The leakage term is relevant when Gaussian mass can cross the boundary
of $I$, in particular when $A_n$ meets or approaches $\partial I$.
\end{remark}

\subsection{Center and covariance choices at the fixed scale}

\begin{lemma}
\label{lem:center-selection}
Choose \(\alpha_n\in A_n\). If \(z_n^-,z_n^+\in\R^n\) satisfy
\begin{equation}
\Theta_n(z_n^-)\le\alpha_n\le\Theta_n(z_n^+),
\label{eq:center-bracketing}
\end{equation}
then there exists \(\mu_n\in\R^n\) such that \(\Theta_n(\mu_n)=\alpha_n\), and hence \(d_n=0\).
\end{lemma}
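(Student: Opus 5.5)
The plan is an intermediate value theorem argument along the line segment joining \(z_n^-\) and \(z_n^+\). Define \(\gamma:[0,1]\to\R^n\) by \(\gamma(t)=(1-t)z_n^-+tz_n^+\) and set \(h(t):=\Theta_n(\gamma(t))\). Since \(\Theta_n\) is \(C^1\), it is in particular continuous, and \(\gamma\) is affine. Hence \(h\) is a continuous real-valued function on the compact interval \([0,1]\).

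By \eqref{eq:center-bracketing}, \(h(0)=\Theta_n(z_n^-)\le\alpha_n\le\Theta_n(z_n^+)=h(1)\). The intermediate value theorem therefore gives some \(t_*\in[0,1]\) with \(h(t_*)=\alpha_n\). This covers the degenerate cases where one of the inequalities is an equality: then \(t_*\in\{0,1\}\) works directly, and \(z_n^-=z_n^+\) also causes no trouble. Put \(\mu_n:=\gamma(t_*)\). Then \(\Theta_n(\mu_n)=\alpha_n\).

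For the last claim, recall that \(d_n=\dist(\Theta_n(\mu_n),A_n)\). Since \(\Theta_n(\mu_n)=\alpha_n\in A_n\), this distance is zero. I expect no real obstacle. The only points to check are that only continuity of \(\Theta_n\) is used (the gradient bound \(K_n<\infty\) is not needed), and that \(\mu_n\) may be chosen freely because the covariance \(\Sigma_n\) is unconstrained by this choice. The bound of \iref{prop:gaussian-observation-estimate} then holds with \(d_n=0\).
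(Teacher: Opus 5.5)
Your proposal is correct and follows the paper's proof: the intermediate value theorem applied to $\Theta_n$ along the segment from $z_n^-$ to $z_n^+$, with $\mu_n$ taken at the resulting parameter and $d_n=0$ because $\alpha_n\in A_n$. You are also right that only continuity of $\Theta_n$ is used and that the gradient bound $K_n<\infty$ is not needed.
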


\begin{proof}
Apply the Intermediate Value Theorem to $\Theta_n(z_n^-+t(z_n^+-z_n^-))$, $0\le t\le1$, and take $\mu_n$ at the resulting parameter value.
\end{proof}

\begin{proposition}
\label{prop:selected-center-convergence}
Suppose \(\alpha_n\in A_n\cap\Theta_n(\R^n)\) and choose \(\mu_n\) so that \(\Theta_n(\mu_n)=\alpha_n\). Then \((\alpha_n)\subset I\) has a convergent subsequence. If, for some \(\alpha\in I\),
\begin{equation}
\sup_{\beta\in A_n}|\beta-\alpha|\longrightarrow0,
\label{eq:maximizer-set-collapse}
\end{equation}
then \(\alpha_n=\Theta_n(\mu_n)\to\alpha\).
\end{proposition}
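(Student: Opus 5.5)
The argument is elementary and has two parts, matching the two claims of \Cref{prop:selected-center-convergence}.

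\emph{Existence of a convergent subsequence.} By hypothesis \(\alpha_n\in A_n\subset I=[a,b]\). So \((\alpha_n)\) is a bounded real sequence. The Bolzano--Weierstrass theorem gives a convergent subsequence, and its limit lies in \(I\) because \(I\) is closed. This step uses nothing about \(\Theta_n\) or \(\mu_n\) beyond \(\Theta_n(\mu_n)=\alpha_n\). That identity is available because \(\alpha_n\in\Theta_n(\R^n)\); alternatively, \Cref{lem:center-selection} provides such a \(\mu_n\) whenever a bracketing pair exists.

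\emph{Convergence under \eqref{eq:maximizer-set-collapse}.} Since \(\alpha_n\in A_n\), the supremum in \eqref{eq:maximizer-set-collapse} is taken over a set containing \(\alpha_n\). This gives the pointwise bound
\[
|\Theta_n(\mu_n)-\alpha|=|\alpha_n-\alpha|\le\sup_{\beta\in A_n}|\beta-\alpha|\longrightarrow0,
\]
so \(\alpha_n=\Theta_n(\mu_n)\to\alpha\). The supremum is finite, since \(A_n\) is nonempty and compact.

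The only delicate point is that the selection \(\alpha_n\in A_n\) may be arbitrary when \(A_n\) is not a singleton. The uniform collapse \eqref{eq:maximizer-set-collapse} makes this irrelevant, because every selection converges to \(\alpha\). As a consistency check, \(d_n=0\) for this choice of \(\mu_n\), which fits the hypotheses of \iref{prop:fixed-scale-event-comparison}. No step is a real obstacle; the argument is a direct unwinding of the definitions.
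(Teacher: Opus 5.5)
Your proof is correct and follows the same route as the paper. Compactness of \(I\) (Bolzano--Weierstrass) gives the convergent subsequence, and since \(\alpha_n\in A_n\), the bound \(|\alpha_n-\alpha|\le\sup_{\beta\in A_n}|\beta-\alpha|\to0\) gives the convergence under \eqref{eq:maximizer-set-collapse}.
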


\begin{proof}
Compactness of $I$ gives the subsequence, while \eqref{eq:maximizer-set-collapse} gives $|\alpha_n-\alpha|\le\sup_{\beta\in A_n}|\beta-\alpha|\to0$.
\end{proof}

\begin{remark}
Only the scalar sequence \(\Theta_n(\mu_n)=\alpha_n\) is asserted to converge; no convergence of \(\mu_n\in\R^n\) is meant when the ambient dimension varies.
\end{remark}

The following isotropic covariance is used only at the fixed scale; the superscript \((F)\) distinguishes it from the curvature-calibrated covariance below.

\begin{proposition}
\label{prop:isotropic-covariance}
Assume \(K_n>0\) and \(\lambda_n>0\), \(\lambda_n\to\infty\). Define
\begin{equation}
\Sigma_n^{(F)}:=\frac{1}{nK_n^2\lambda_n}I_n.
\label{eq:isotropic-covariance}
\end{equation}
Then \(\Sigma_n^{(F)}\) is positive definite and
\begin{equation}
\tr(\Sigma_n^{(F)})=\frac{1}{K_n^2\lambda_n},
\qquad
K_n^2\tr(\Sigma_n^{(F)})=\frac{1}{\lambda_n}\longrightarrow0,
\qquad
\|\Sigma_n^{(F)}\|_{\rm op}=\frac{1}{nK_n^2\lambda_n}.
\label{eq:isotropic-fluctuation}
\end{equation}
\end{proposition}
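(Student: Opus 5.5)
This is a direct computation from the definition \eqref{eq:isotropic-covariance}. I would verify the four claims in order: positive definiteness, the trace identity, the limit, and the operator norm. Set
\[
s_n:=\frac{1}{nK_n^2\lambda_n}.
\]
Since $n\ge1$, $K_n>0$ and $\lambda_n>0$, the scalar $s_n$ is a well-defined positive real number. Hence $\Sigma_n^{(F)}=s_nI_n$ is symmetric, and for every nonzero $v\in\R^n$ we have $v^\top\Sigma_n^{(F)}v=s_n\|v\|^2>0$. This gives positive definiteness.

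For the trace, linearity and $\tr(I_n)=n$ give
\[
\tr(\Sigma_n^{(F)})=ns_n=\frac{1}{K_n^2\lambda_n}.
\]
Multiplying by $K_n^2$, which is finite by the standing assumption $K_n<\infty$, yields $K_n^2\tr(\Sigma_n^{(F)})=1/\lambda_n$. This tends to $0$ because $\lambda_n\to\infty$.

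For the operator norm, every eigenvalue of $s_nI_n$ equals $s_n$. Equivalently, $\|s_nI_nv\|=s_n\|v\|$ for all $v$, so $\|\Sigma_n^{(F)}\|_{\rm op}=s_n=1/(nK_n^2\lambda_n)$.

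None of these steps is a real obstacle. The only point needing care is the role of the hypotheses. The assumption $K_n>0$ is what makes the definition meaningful, since it rules out division by zero; finiteness of $K_n$ is part of the standing setup of \Cref{sec:gaussian}. Note also that the conclusion $K_n^2\tr(\Sigma_n^{(F)})\to0$ is exactly the covariance hypothesis of \Cref{cor:gaussian-fixed-concentration} and \iref{prop:fixed-scale-event-comparison}. So this choice of covariance can be combined with a center chosen via \Cref{lem:center-selection}, which gives $d_n=0$, to meet those hypotheses.
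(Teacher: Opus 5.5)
Your proposal is correct and takes essentially the same approach as the paper. The paper's proof likewise notes that every eigenvalue of $\Sigma_n^{(F)}$ equals $(nK_n^2\lambda_n)^{-1}>0$, and then reads off the trace as the sum of the eigenvalues and the operator norm as their maximum.
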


\begin{proof}
All eigenvalues equal $(nK_n^2\lambda_n)^{-1}>0$; their sum and maximum give \eqref{eq:isotropic-fluctuation}.
\end{proof}

When the maximizer is unique, \(\lambda_n\) is naturally supplied by the curvature of \(\log\rho_n\).

\begin{lemma}
\label{lem:curvature-rho}
Assume \(A_n=\{\alpha_n\}\), \(\alpha_n\in\operatorname{int}(I)\), \(\rho_n(\alpha_n)=m_n>0\), and \(\rho_n\) is twice differentiable near \(\alpha_n\). Then \(\rho_n'(\alpha_n)=0\) and
\begin{equation}
-\bigl(\log\rho_n\bigr)''(\alpha_n)=-\frac{\rho_n''(\alpha_n)}{m_n}.
\label{eq:curvature-rho-second}
\end{equation}
Suppose that $c_0>0$ and $b_n\to\infty$. If
\(-\rho_n''(\alpha_n)\ge c_0b_nm_n\), then
\(\lambda_n:=-\bigl(\log\rho_n\bigr)''(\alpha_n)\to\infty\).
\end{lemma}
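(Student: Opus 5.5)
The plan has three short steps. First establish the first-order condition, then compute the logarithmic second derivative, and finally read off the divergence of \(\lambda_n\) from the hypothesis.

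For \(\rho_n'(\alpha_n)=0\), use that \(\alpha_n\in\operatorname{int}(I)\) is a global maximizer of \(\rho_n\) on \(I\). Since \(\rho_n\) is differentiable at \(\alpha_n\), Fermat's theorem gives \(\rho_n'(\alpha_n)=0\). Concretely, the one-sided difference quotients \((\rho_n(\alpha_n+h)-m_n)/h\) are \(\le0\) for small \(h>0\) and \(\ge0\) for small \(h<0\), so the derivative vanishes. Uniqueness of the maximizer is not needed for this step.

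For the identity \eqref{eq:curvature-rho-second}, note that continuity and \(\rho_n(\alpha_n)=m_n>0\) give a neighborhood of \(\alpha_n\) on which \(\rho_n>0\). Shrinking it, we may assume \(\rho_n\) is also twice differentiable there. On this neighborhood \(\log\rho_n\) is twice differentiable, and the chain and quotient rules give
\[
(\log\rho_n)''=\frac{\rho_n''}{\rho_n}-\frac{(\rho_n')^2}{\rho_n^2}.
\]
Evaluating at \(\alpha_n\) and using \(\rho_n'(\alpha_n)=0\) and \(\rho_n(\alpha_n)=m_n\), the second term drops out. This yields \eqref{eq:curvature-rho-second}.

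Finally, dividing the hypothesis \(-\rho_n''(\alpha_n)\ge c_0b_nm_n\) by \(m_n>0\) gives \(\lambda_n=-\rho_n''(\alpha_n)/m_n\ge c_0b_n\). Since \(c_0>0\) and \(b_n\to\infty\), it follows that \(\lambda_n\to\infty\); in particular \(\lambda_n>0\) for large \(n\), as \iref{prop:isotropic-covariance} requires.

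No step is a genuine obstacle. The only point needing care is that \(\log\rho_n\) must be defined and twice differentiable near \(\alpha_n\), and this is exactly what the positivity neighborhood supplies.
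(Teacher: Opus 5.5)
Your proposal is correct and follows essentially the same route as the paper: Fermat's theorem at the interior maximizer, then the identity \((\log\rho_n)''=\rho_n''/\rho_n-(\rho_n'/\rho_n)^2\) evaluated at \(\alpha_n\), then dividing the hypothesis by \(m_n\) to get \(\lambda_n\ge c_0b_n\to\infty\). Your extra care about the positivity neighborhood where \(\log\rho_n\) is defined and twice differentiable is a detail the paper leaves implicit.
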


\begin{proof}
Fermat's theorem gives \(\rho_n'(\alpha_n)=0\). Since \((\log\rho_n)''=\rho_n''/\rho_n-(\rho_n'/\rho_n)^2\), evaluation at \(\alpha_n\) gives \eqref{eq:curvature-rho-second}; hence \(\lambda_n=-\rho_n''(\alpha_n)/m_n\ge c_0b_n\to\infty\).
\end{proof}

\begin{corollary}
\label{cor:constructive-comparison}
Fix \(c\in(0,1)\). Suppose \(\eta_{n,\delta}^{(c)}\to0\) for every fixed \(\delta>0\), \(K_n>0\), \(\lambda_n\to\infty\), and \(\Theta_n(\mu_n)\in A_n\) for some \(\mu_n\in\R^n\). If \(\Sigma_n=\Sigma_n^{(F)}\), then \(|f_n(\delta)-g_n(\delta)|\to0\) for every fixed \(\delta>0\).
\end{corollary}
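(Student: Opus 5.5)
The plan is to reduce the statement directly to \iref{prop:fixed-scale-event-comparison}. That proposition needs three inputs: \(\eta_{n,\delta}^{(c)}\to0\) for every fixed \(\delta>0\), \(K_n^2\tr(\Sigma_n)\to0\), and \(d_n\to0\). The first is assumed outright, so the work is to check the other two for the choice \(\Sigma_n=\Sigma_n^{(F)}\) and the given \(\mu_n\).

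For the covariance term, I would apply \Cref{prop:isotropic-covariance}. Its hypotheses are \(K_n>0\) and \(\lambda_n>0\) with \(\lambda_n\to\infty\). The condition \(\lambda_n>0\) may fail for finitely many \(n\), but \(\lambda_n\to\infty\) forces \(\lambda_n>0\) for all large \(n\). Since the conclusion is asymptotic, we may discard the finitely many early indices, or simply note that \(\Sigma_n^{(F)}\) is only required to be defined for large \(n\). Then \eqref{eq:isotropic-fluctuation} gives \(K_n^2\tr(\Sigma_n^{(F)})=1/\lambda_n\to0\).

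For the centering term, the hypothesis \(\Theta_n(\mu_n)\in A_n\) gives \(d_n=\dist(\Theta_n(\mu_n),A_n)=0\) for every \(n\), so trivially \(d_n\to0\).

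With all three hypotheses verified, the second part of \iref{prop:fixed-scale-event-comparison} yields \(|f_n(\delta)-g_n(\delta)|\to0\) for every fixed \(\delta>0\). Explicitly, \eqref{eq:gaussian-comparison-bound} becomes \(|f_n(\delta)-g_n(\delta)|\le\eta_{n,\delta}^{(c)}+1/(\lambda_n\delta^2)\), and both terms tend to zero. There is no real obstacle. The only point needing care is the bookkeeping that \(\lambda_n>0\) holds only eventually, so that \(\Sigma_n^{(F)}\) is well defined and positive definite for large \(n\), which suffices for a limit statement.
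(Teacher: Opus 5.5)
Your proposal is correct and matches the paper's proof: both note that \(\Theta_n(\mu_n)\in A_n\) gives \(d_n=0\), that \(K_n^2\tr(\Sigma_n^{(F)})=\lambda_n^{-1}\to0\), and then invoke \iref{prop:fixed-scale-event-comparison}. Your added remark that \(\lambda_n>0\) is needed only for large \(n\) is harmless extra care; the explicit bound \(\eta_{n,\delta}^{(c)}+1/(\lambda_n\delta^2)\) you state is also right.
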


\begin{proof}
Since \(\Theta_n(\mu_n)\in A_n\), we have
$d_n=\dist(\Theta_n(\mu_n),A_n)=0$. Moreover, by the definition of \(\Sigma_n^{(F)}\),
$K_n^2\tr(\Sigma_n^{(F)})
=\lambda_n^{-1}\to0$. Thus all the hypotheses of \iref{prop:fixed-scale-event-comparison} are satisfied, and hence
$|f_n(\delta)-g_n(\delta)|\to0$ for every fixed \(\delta>0\).
\end{proof}

\section{Local Gaussian approximation}
\label{sec:local}

Assume throughout that \(I=[a,b]\subset\R\), \(A_n=\{\alpha_n\}\), \(\alpha_n\in(a,b)\), \(\rho_n>0\) on \((a,b)\), \(V_n:=\log\rho_n\in C^2((a,b))\), and \(V_n''(\alpha_n)<0\).
Since \(\alpha_n\) is an interior maximizer, \(V_n'(\alpha_n)=0\). Set
\begin{equation}
\lambda_n:=-V_n''(\alpha_n)>0,
\qquad
\tau_n:=\lambda_n^{-1/2}.
\label{eq:local-tau}
\end{equation}
Thus \(\lambda_n\to\infty\) is equivalent to \(\tau_n\to0\).

\subsection{Local quadratic approximation}

\begin{lemma}
\midlabel{lemma}{lem:integral-taylor-remainder}
Let \(V\in C^2((a,b))\), \(\alpha,\alpha+h\in(a,b)\). Then
\begin{equation}
\begin{aligned}
V(\alpha+h)
&=V(\alpha)+V'(\alpha)h+\frac12V''(\alpha)h^2\\
&\quad+h^2\int_0^1(1-t)\bigl[V''(\alpha+th)-V''(\alpha)\bigr]\dd t.
\end{aligned}
\label{eq:integral-taylor-remainder}
\end{equation}
Consequently,
\begin{equation}
V(\alpha+h)=V(\alpha)+V'(\alpha)h+\frac12V''(\alpha)h^2+o(h^2)
\qquad(h\to0).
\label{eq:taylor-o-h2}
\end{equation}
\end{lemma}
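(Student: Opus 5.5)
The plan is to use Taylor's formula with integral remainder, but applied at the level of the first derivative. The point is that $V$ is only assumed $C^2$, so we cannot expand to third order. Fix $\alpha,\alpha+h\in(a,b)$. Since $(a,b)$ is an interval, the whole segment $\{\alpha+th:0\le t\le1\}$ lies in $(a,b)$, and $t\mapsto V(\alpha+th)$ is $C^2$ on $[0,1]$.

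The first step writes $V(\alpha+h)-V(\alpha)=\int_0^1 V'(\alpha+th)h\dd t$. We then integrate by parts with the antiderivative $-(1-t)$ of $1$:
\[
V(\alpha+h)=V(\alpha)+V'(\alpha)h+h^2\int_0^1(1-t)V''(\alpha+th)\dd t .
\]
The boundary term at $t=1$ vanishes, and at $t=0$ it gives $V'(\alpha)h$. Equivalently, one can check directly that the derivative in $s$ of $V(\alpha+s)-V(\alpha)-V'(\alpha)s-\int_0^s(s-u)V''(\alpha+u)\dd u$ is zero. Next we use $\int_0^1(1-t)\dd t=\tfrac12$ to split off $\tfrac12V''(\alpha)h^2$. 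This leaves exactly the bracketed remainder
\[
h^2\int_0^1(1-t)\bigl[V''(\alpha+th)-V''(\alpha)\bigr]\dd t,
\]
which proves \eqref{eq:integral-taylor-remainder}.

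For \eqref{eq:taylor-o-h2}, fix $\alpha$ and call the integral above $R(h)$. We have
\[
|R(h)|\le \tfrac12\sup_{|s|\le|h|}|V''(\alpha+s)-V''(\alpha)|.
\]
This supremum tends to $0$ as $h\to0$ by continuity of $V''$ at $\alpha$. Hence $h^2R(h)=o(h^2)$. For $|h|$ small, $\alpha+h$ stays in $(a,b)$ because the interval is open.

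The only point needing care, and the main obstacle, is justifying the integration by parts or fundamental theorem of calculus with merely continuous $V''$. This is standard: $V'$ is $C^1$ on the compact segment, so $t\mapsto V'(\alpha+th)$ is continuously differentiable on $[0,1]$, and all integrals are Riemann integrals of continuous functions.
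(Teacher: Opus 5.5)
Your proof is correct and follows essentially the same route as the paper. The paper applies the Fundamental Theorem of Calculus twice to get $V(\alpha+h)-V(\alpha)=V'(\alpha)h+\int_0^h(h-r)V''(\alpha+r)\dd r$, which is the identity in your alternative derivative check; your integration by parts reaches the same formula, and then, as in the paper, you subtract $\tfrac12V''(\alpha)h^2$ and use continuity of $V''$ at $\alpha$ for the $o(h^2)$ term.
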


\begin{proof}
Twice applying the Fundamental Theorem of Calculus gives $V(\alpha+h)-V(\alpha)=V'(\alpha)h+\int_0^h(h-r)V''(\alpha+r)\dd r$. Add and subtract $V''(\alpha)$ and set $r=th$ to obtain \eqref{eq:integral-taylor-remainder}. Continuity of $V''$ at $\alpha$ makes the integral factor in its remainder $o(1)$, proving \eqref{eq:taylor-o-h2}.
\end{proof}

For \(M>0\), define
\begin{equation}
\omega_n(M):=\frac1{\lambda_n}
\sup_{\substack{\theta\in(a,b)\\|\theta-\alpha_n|\le M\tau_n}}
|V_n''(\theta)-V_n''(\alpha_n)|.
\label{eq:local-curvature-variation}
\end{equation}
We assume the relative-curvature condition
\begin{equation}
\omega_n(M)\longrightarrow0
\qquad\text{for every fixed }M>0,
\label{eq:local-curvature-condition}
\end{equation}
and the asymptotic-interiority condition
\begin{equation}
\frac{\dist(\alpha_n,\partial I)}{\tau_n}\longrightarrow\infty.
\label{eq:local-interior-condition}
\end{equation}

\begin{lemma}
\label{lem:uniform-interior}
If \(\dist(\alpha_n,\partial I)\ge d_0>0\) for every \(n\) and \(\lambda_n\to\infty\), then \eqref{eq:local-interior-condition} holds.
\end{lemma}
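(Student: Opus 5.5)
The plan is a direct one-line estimate, since the statement is essentially a comparison of scales. By \eqref{eq:local-tau}, \(\tau_n=\lambda_n^{-1/2}\), and the hypothesis \(\lambda_n\to\infty\) is equivalent to \(\tau_n\to0\), as noted right after \eqref{eq:local-tau}. The positivity \(\lambda_n>0\) is part of the standing assumptions of this section, so \(\tau_n\) is well defined and positive for every \(n\).

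The key step is to combine the uniform lower bound on the distance to the boundary with the upper bound \(1/\tau_n\). For every \(n\),
\[
\frac{\dist(\alpha_n,\partial I)}{\tau_n}
\ge\frac{d_0}{\tau_n}
=d_0\lambda_n^{1/2}.
\]
Since \(d_0>0\) is fixed and \(\lambda_n^{1/2}\to\infty\), the right-hand side diverges. Therefore \(\dist(\alpha_n,\partial I)/\tau_n\to\infty\), which is exactly \eqref{eq:local-interior-condition}.

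One small point deserves a check. Here \(\partial I=\{a,b\}\), so \(\dist(\alpha_n,\partial I)=\min\{\alpha_n-a,\,b-\alpha_n\}\), which is finite and positive because \(\alpha_n\in(a,b)\). The quotient is therefore a well-defined positive real number.

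No step presents a genuine obstacle; the only care needed is noting that \(d_0\) does not depend on \(n\), so the lower bound \(d_0\lambda_n^{1/2}\) is uniform in the sense required.
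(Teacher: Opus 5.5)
Your proof is correct and matches the paper's: both use \(\tau_n=\lambda_n^{-1/2}\) to write \(\dist(\alpha_n,\partial I)/\tau_n=\sqrt{\lambda_n}\,\dist(\alpha_n,\partial I)\ge d_0\sqrt{\lambda_n}\to\infty\). Your extra check that the quotient is well defined for \(\partial I=\{a,b\}\) is harmless but adds nothing essential.
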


\begin{proof}
Indeed, \(\dist(\alpha_n,\partial I)/\tau_n=\sqrt{\lambda_n}\,\dist(\alpha_n,\partial I)\ge d_0\sqrt{\lambda_n}\to\infty\).
\end{proof}

\begin{theorem}
\midlabel{theorem}{thm:exact-local-neighborhood}
Under \eqref{eq:local-interior-condition}, for every fixed \(x>0\),
\begin{equation}
U_{\tau_nx}^I(A_n)=(\alpha_n-\tau_nx,\alpha_n+\tau_nx)
\label{eq:exact-local-neighborhood}
\end{equation}
for all sufficiently large \(n\).
\end{theorem}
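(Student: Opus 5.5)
Since $A_n=\{\alpha_n\}$ is a singleton, $\dist(\theta,A_n)=|\theta-\alpha_n|$ for every $\theta\in\R$. The definition of $\widetilde U_\delta(A_n)$ therefore gives
\[
\widetilde U_{\tau_nx}(A_n)=(\alpha_n-\tau_nx,\alpha_n+\tau_nx),
\qquad
U_{\tau_nx}^I(A_n)=I\cap(\alpha_n-\tau_nx,\alpha_n+\tau_nx).
\]
So the claim reduces to showing that the open interval $(\alpha_n-\tau_nx,\alpha_n+\tau_nx)$ lies inside $I=[a,b]$ for all large $n$. Then the intersection with $I$ changes nothing.

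To show this, fix $x>0$ and use the asymptotic-interiority condition \eqref{eq:local-interior-condition}. We have $\dist(\alpha_n,\partial I)=\min\{\alpha_n-a,\,b-\alpha_n\}$, which is positive because $\alpha_n\in(a,b)$. Condition \eqref{eq:local-interior-condition} gives $N_x$ such that $\dist(\alpha_n,\partial I)/\tau_n>x$ for $n\ge N_x$, that is, $\tau_nx<\min\{\alpha_n-a,b-\alpha_n\}$. For such $n$ this yields $a<\alpha_n-\tau_nx$ and $\alpha_n+\tau_nx<b$. Hence $(\alpha_n-\tau_nx,\alpha_n+\tau_nx)\subset(a,b)\subset I$, and \eqref{eq:exact-local-neighborhood} follows.

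There is no real obstacle here. The only point needing care is the identification of $\dist(\alpha_n,\partial I)$ with $\min\{\alpha_n-a,b-\alpha_n\}$, which uses that $\alpha_n$ is an interior point. One should also note that $\tau_n>0$ is finite, since $\lambda_n>0$, so the ratio in \eqref{eq:local-interior-condition} is well defined.
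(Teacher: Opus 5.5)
Your proof is correct and matches the paper's argument: you use $A_n=\{\alpha_n\}$ to identify $\widetilde U_{\tau_nx}(A_n)$ with $(\alpha_n-\tau_nx,\alpha_n+\tau_nx)$, and the asymptotic-interiority condition to get $\tau_nx<\dist(\alpha_n,\partial I)$ for large $n$. Hence intersecting with $I$ changes nothing. These are the same two steps as in the paper's proof.
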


\begin{proof}
For large \(n\), \(\tau_nx<\dist(\alpha_n,\partial I)\); hence \((\alpha_n-\tau_nx,\alpha_n+\tau_nx)\subset(a,b)\). Since \(A_n=\{\alpha_n\}\), the defining inequality \(\dist(\theta,A_n)<\tau_nx\) is exactly \(|\theta-\alpha_n|<\tau_nx\).
\end{proof}

Set
\[
J_n:=\{y\in\R:\alpha_n+\tau_ny\in I\}
=\left[\frac{a-\alpha_n}{\tau_n},\frac{b-\alpha_n}{\tau_n}\right]
\]
and
\begin{equation}
h_n(y):=
\begin{cases}
\displaystyle\frac{\rho_n(\alpha_n+\tau_ny)}{m_n},
& y\in J_n,\\[4pt]
0, & y\notin J_n.
\end{cases}
\label{eq:local-hn}
\end{equation}
By \eqref{eq:local-interior-condition}, every fixed \([-M,M]\) is contained in \(J_n\) for all sufficiently large \(n\).

\begin{lemma}
\midlabel{lemma}{lem:local-quadratic-approximation}
Assume \eqref{eq:local-interior-condition} and
\eqref{eq:local-curvature-condition}. For every fixed \(M>0\), the following
estimate holds for all sufficiently large \(n\):
\begin{equation}
\sup_{|y|\le M}
\left|V_n(\alpha_n+\tau_ny)-V_n(\alpha_n)+\frac{y^2}{2}\right|
\le\frac{M^2}{2}\omega_n(M)\longrightarrow0.
\label{eq:local-log-bound}
\end{equation}
\end{lemma}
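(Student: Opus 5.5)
The estimate follows directly from the integral form of Taylor's theorem in \iref{lem:integral-taylor-remainder}, applied with $V=V_n$, $\alpha=\alpha_n$ and $h=\tau_ny$.

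First fix $M>0$. By \eqref{eq:local-interior-condition} there is $n_0$ such that $M\tau_n<\dist(\alpha_n,\partial I)$ for all $n\ge n_0$. For such $n$ and every $|y|\le M$, the closed segment between $\alpha_n$ and $\alpha_n+\tau_ny$ lies in $(a,b)$, so \eqref{eq:integral-taylor-remainder} is applicable.

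Next, substitute the known quantities. We have $V_n'(\alpha_n)=0$ because $\alpha_n$ is an interior maximizer, and $V_n''(\alpha_n)=-\lambda_n=-\tau_n^{-2}$. Hence the quadratic term is
\[
\tfrac12V_n''(\alpha_n)\tau_n^2y^2=-\tfrac{y^2}{2}.
\]
This leaves the identity
\[
V_n(\alpha_n+\tau_ny)-V_n(\alpha_n)+\frac{y^2}{2}
=\tau_n^2y^2\int_0^1(1-t)\bigl[V_n''(\alpha_n+t\tau_ny)-V_n''(\alpha_n)\bigr]\dd t.
\]

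Now bound the remainder. For $t\in[0,1]$ the point $\theta=\alpha_n+t\tau_ny$ lies in $(a,b)$ and satisfies $|\theta-\alpha_n|\le M\tau_n$. By the definition \eqref{eq:local-curvature-variation},
\[
|V_n''(\theta)-V_n''(\alpha_n)|\le\lambda_n\omega_n(M).
\]
Since $\int_0^1(1-t)\dd t=\tfrac12$ and $\tau_n^2\lambda_n=1$, the absolute value of the right-hand side is at most $\tfrac12y^2\omega_n(M)\le\tfrac{M^2}{2}\omega_n(M)$. This bound is uniform in $|y|\le M$, so taking the supremum over $y$ gives \eqref{eq:local-log-bound}. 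The convergence to $0$ is exactly hypothesis \eqref{eq:local-curvature-condition}.

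There is no real obstacle here. The only point needing care is the first step: the interiority condition is what guarantees that the whole segment, and not just its endpoint, stays inside the domain of $V_n$. That is why the estimate holds only for sufficiently large $n$.
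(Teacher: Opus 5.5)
Your proposal is correct and follows the paper's proof essentially step for step. Interiority places $[\alpha_n-M\tau_n,\alpha_n+M\tau_n]$ inside $(a,b)$ for large $n$; then the integral Taylor remainder with $h=\tau_n y$, together with $V_n'(\alpha_n)=0$ and $\lambda_n\tau_n^2=1$, bounds the error by $\tfrac{y^2}{2}\omega_n(M)\le\tfrac{M^2}{2}\omega_n(M)$.
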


\begin{proof}
For large \(n\), \([\alpha_n-M\tau_n,\alpha_n+M\tau_n]\subset(a,b)\). Apply \iref{lem:integral-taylor-remainder} with \(h=\tau_ny\), use \(V_n'(\alpha_n)=0\), \(V_n''(\alpha_n)=-\lambda_n\), and \(\lambda_n\tau_n^2=1\):
\begin{align*}
&V_n(\alpha_n+\tau_ny)-V_n(\alpha_n)+\frac{y^2}{2}\\
&\quad=\tau_n^2y^2\int_0^1(1-t)
[V_n''(\alpha_n+t\tau_ny)-V_n''(\alpha_n)]\dd t.
\end{align*}
For \(|y|\le M\), \(|t\tau_ny|\le M\tau_n\), so
\begin{align*}
\left|V_n(\alpha_n+\tau_ny)-V_n(\alpha_n)+\frac{y^2}{2}\right|
&\le\tau_n^2y^2\lambda_n\omega_n(M)\int_0^1(1-t)\dd t\\
&=\frac{y^2}{2}\omega_n(M)
\le\frac{M^2}{2}\omega_n(M).\qedhere
\end{align*}
\end{proof}

\begin{corollary}
\midlabel{corollary}{cor:local-profile-limit}
Under the preceding assumptions, for every fixed \(M>0\),
\begin{equation}
\sup_{|y|\le M}|h_n(y)-e^{-y^2/2}|\longrightarrow0.
\label{eq:local-profile-uniform}
\end{equation}
\end{corollary}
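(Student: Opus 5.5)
The plan is to exponentiate the uniform logarithmic estimate of \iref{lem:local-quadratic-approximation}. Fix \(M>0\). By \eqref{eq:local-interior-condition}, for all sufficiently large \(n\) we have \([-M,M]\subset J_n\), and also \(\alpha_n+\tau_ny\in(a,b)\) for every \(|y|\le M\). On this range \(\rho_n>0\) and \(m_n=\rho_n(\alpha_n)\), so \(h_n(y)=\exp\bigl(V_n(\alpha_n+\tau_ny)-V_n(\alpha_n)\bigr)\). Set
\[
\varepsilon_n(y):=V_n(\alpha_n+\tau_ny)-V_n(\alpha_n)+\frac{y^2}{2}.
\]
Then \(h_n(y)=e^{-y^2/2}e^{\varepsilon_n(y)}\) for \(|y|\le M\).

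Next I will use the elementary inequality \(|e^{s}-1|\le |s|e^{|s|}\) for real \(s\). Since \(e^{-y^2/2}\le1\), this gives
\[
\sup_{|y|\le M}|h_n(y)-e^{-y^2/2}|
\le \sup_{|y|\le M}|e^{\varepsilon_n(y)}-1|
\le \epsilon_n e^{\epsilon_n},
\qquad
\epsilon_n:=\frac{M^2}{2}\omega_n(M).
\]
By \eqref{eq:local-log-bound} we have \(\sup_{|y|\le M}|\varepsilon_n(y)|\le\epsilon_n\), and by \eqref{eq:local-curvature-condition} we have \(\epsilon_n\to0\). Hence the right-hand side tends to zero, which is \eqref{eq:local-profile-uniform}.

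There is no serious obstacle. The only point that needs care is making sure the identity \(h_n=\exp(V_n(\cdot)-V_n(\alpha_n))\) holds on all of \([-M,M]\), rather than being cut off by the indicator of \(J_n\) or hitting the boundary points where \(\log\rho_n\) may be undefined. The interiority condition \eqref{eq:local-interior-condition} handles this for all large \(n\), exactly as in the proof of \iref{lem:local-quadratic-approximation}.
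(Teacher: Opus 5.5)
Your proposal is correct and follows the paper's proof. For large $n$ you write $h_n(y)=\exp\{V_n(\alpha_n+\tau_ny)-V_n(\alpha_n)\}$ on $[-M,M]\subset J_n$ and exponentiate the uniform bound \eqref{eq:local-log-bound}; the only difference is that you make the exponentiation step explicit via $|e^s-1|\le|s|e^{|s|}$ and $e^{-y^2/2}\le1$, which the paper leaves implicit.
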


\begin{proof}
For large \(n\), \([-M,M]\subset J_n\), and \(h_n(y)=\exp\{V_n(\alpha_n+\tau_ny)-V_n(\alpha_n)\}\). The exponents converge uniformly to \(-y^2/2\) by \iref{lem:local-quadratic-approximation}; exponentiation preserves this uniform convergence on \([-M,M]\).
\end{proof}

A third-derivative bound gives a convenient sufficient condition for the same conclusion; the assumption \(\lambda_n\to\infty\) alone does not.

\begin{proposition}
\label{prop:third-derivative-condition}
Assume \eqref{eq:local-interior-condition} and suppose that,
for every fixed \(M>0\), \(V_n\in C^3\) on an open neighborhood of
\([\alpha_n-M\tau_n,\alpha_n+M\tau_n]\) for all sufficiently large \(n\), and
\begin{equation}
\tau_n^3
\sup_{|\theta-\alpha_n|\le M\tau_n}
|V_n'''(\theta)|
\longrightarrow0.
\label{eq:third-derivative-smallness}
\end{equation}
Then \eqref{eq:local-curvature-condition} holds, and hence
\eqref{eq:local-log-bound} and \eqref{eq:local-profile-uniform} hold.
\end{proposition}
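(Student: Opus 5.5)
The idea is to bound the variation of \(V_n''\) on the window \([\alpha_n-M\tau_n,\alpha_n+M\tau_n]\) by the mean value theorem, and then use \(\lambda_n\tau_n^2=1\) to see that the normalized quantity \(\omega_n(M)\) is exactly of the form in \eqref{eq:third-derivative-smallness}.

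Fix \(M>0\). By \eqref{eq:local-interior-condition}, for all sufficiently large \(n\) the closed window \([\alpha_n-M\tau_n,\alpha_n+M\tau_n]\) lies in \((a,b)\). By hypothesis, \(V_n\) is also \(C^3\) on an open neighborhood of this window for large \(n\). Consequently, the supremum in \eqref{eq:local-curvature-variation} runs over the whole window, and the constraint \(\theta\in(a,b)\) is automatic.

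For \(\theta\) in the window, apply the mean value theorem to \(V_n''\) on the segment between \(\alpha_n\) and \(\theta\):
\[
|V_n''(\theta)-V_n''(\alpha_n)|
\le|\theta-\alpha_n|\sup_{|s-\alpha_n|\le M\tau_n}|V_n'''(s)|
\le M\tau_n\sup_{|s-\alpha_n|\le M\tau_n}|V_n'''(s)|.
\]
Divide by \(\lambda_n=\tau_n^{-2}\) and take the supremum over \(\theta\) to obtain
\[
\omega_n(M)\le M\,\tau_n^3\sup_{|s-\alpha_n|\le M\tau_n}|V_n'''(s)|.
\]
By \eqref{eq:third-derivative-smallness}, the right-hand side tends to \(0\). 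Since \(M>0\) was arbitrary, this proves \eqref{eq:local-curvature-condition}.

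The remaining conclusions follow directly. With \eqref{eq:local-interior-condition} and \eqref{eq:local-curvature-condition} both in hand, \iref{lem:local-quadratic-approximation} gives \eqref{eq:local-log-bound}, and \iref{cor:local-profile-limit} gives \eqref{eq:local-profile-uniform}.

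The only point needing care is confirming that the window lies inside \((a,b)\), so that the mean value theorem applies to \(V_n''\) there. This is exactly what the interiority assumption provides for large \(n\).
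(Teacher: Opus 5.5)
Your proposal is correct and matches the paper's proof: applying the mean value theorem to \(V_n''\) on the window gives \(\omega_n(M)\le M\tau_n^3\sup_{|\theta-\alpha_n|\le M\tau_n}|V_n'''(\theta)|\to0\), and the quadratic-approximation lemma and its corollary then give the remaining conclusions. You also check explicitly that interiority puts the window inside \((a,b)\) for large \(n\), a point the paper leaves implicit.
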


\begin{proof}
For all sufficiently large \(n\), the Mean Value Theorem gives
\[
\omega_n(M)
\le \frac{M\tau_n}{\lambda_n}
\sup_{|\theta-\alpha_n|\le M\tau_n}|V_n'''(\theta)|
=M\tau_n^3
\sup_{|\theta-\alpha_n|\le M\tau_n}|V_n'''(\theta)|
\longrightarrow0.
\]
Thus \eqref{eq:local-curvature-condition} holds. The conclusions now follow
from \iref{lem:local-quadratic-approximation} and
\iref{cor:local-profile-limit}.
\end{proof}

Since $\tau_n=\lambda_n^{-1/2}$, condition \eqref{eq:third-derivative-smallness} is $\sup_{|\theta-\alpha_n|\le M/\sqrt{\lambda_n}}|V_n'''(\theta)|/\lambda_n^{3/2}\to0$; in particular, an $O(\lambda_n)$ bound on this window is sufficient.

\subsection{Local numerator and tail control}

\begin{proposition}
\label{prop:local-numerator}
Under \eqref{eq:local-interior-condition} and \eqref{eq:local-curvature-condition}, for every fixed \(x>0\),
\begin{equation}
\frac1{m_n\tau_n}\int_{U_{\tau_nx}^I(A_n)}\rho_n(\theta)\dd\theta
\longrightarrow\int_{-x}^{x}e^{-y^2/2}\dd y.
\label{eq:local-numerator-limit}
\end{equation}
\end{proposition}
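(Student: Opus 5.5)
By \Cref{thm:exact-local-neighborhood}, for all sufficiently large \(n\) the set \(U_{\tau_nx}^I(A_n)\) is exactly the interval \((\alpha_n-\tau_nx,\alpha_n+\tau_nx)\). Hence the left-hand side of \eqref{eq:local-numerator-limit} is an integral over an honest interval contained in \((a,b)\). I will substitute \(\theta=\alpha_n+\tau_ny\), so \(d\theta=\tau_n\,dy\). The interval \((\alpha_n-\tau_nx,\alpha_n+\tau_nx)\) corresponds to \(y\in(-x,x)\), and \([-x,x]\subset J_n\) for large \(n\). Using the definition \eqref{eq:local-hn} of \(h_n\), this gives, for large \(n\),
\[
\frac1{m_n\tau_n}\int_{U_{\tau_nx}^I(A_n)}\rho_n(\theta)\dd\theta
=\int_{-x}^{x}h_n(y)\dd y .
\]

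Next I apply \Cref{cor:local-profile-limit} with \(M=x\). It gives \(\sup_{|y|\le x}|h_n(y)-e^{-y^2/2}|\to0\) under \eqref{eq:local-interior-condition} and \eqref{eq:local-curvature-condition}. Since the domain of integration has fixed finite length \(2x\),
\[
\left|\int_{-x}^{x}h_n(y)\dd y-\int_{-x}^{x}e^{-y^2/2}\dd y\right|
\le 2x\sup_{|y|\le x}\bigl|h_n(y)-e^{-y^2/2}\bigr|\longrightarrow0,
\]
which is \eqref{eq:local-numerator-limit}.

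No dominated convergence or tail estimate is needed here, because the window is bounded in the \(y\) variable. The only point that needs care is that the identification of \(U^I_{\tau_nx}(A_n)\) with the symmetric interval, and the inclusion \([-x,x]\subset J_n\), both hold only for sufficiently large \(n\). This is harmless for a limit statement: the identity above holds eventually, and both facts come from \eqref{eq:local-interior-condition} via \Cref{thm:exact-local-neighborhood}. So there is no real obstacle; the substance already lies in \Cref{lem:local-quadratic-approximation}.
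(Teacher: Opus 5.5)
Your proposal is correct and follows the paper's proof. Like the paper, you identify \(U^I_{\tau_nx}(A_n)\) with \((\alpha_n-\tau_nx,\alpha_n+\tau_nx)\) via \Cref{thm:exact-local-neighborhood}, change variables \(\theta=\alpha_n+\tau_ny\), and then use the uniform convergence from \Cref{cor:local-profile-limit} on the bounded window \([-x,x]\); your explicit bound \(2x\sup_{|y|\le x}|h_n(y)-e^{-y^2/2}|\) simply writes out the last step, which the paper leaves implicit.
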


\begin{proof}
For large \(n\), \iref{thm:exact-local-neighborhood} and \(\theta=\alpha_n+\tau_ny\) give
\begin{equation}
\frac1{m_n\tau_n}\int_{U_{\tau_nx}^I(A_n)}\rho_n(\theta)\dd\theta
=\int_{-x}^{x}\frac{\rho_n(\alpha_n+\tau_ny)}{m_n}\dd y.
\label{eq:local-numerator-change-variable}
\end{equation}
The integrand converges uniformly on \([-x,x]\) to \(e^{-y^2/2}\) by \iref{cor:local-profile-limit}, proving the claim.
\end{proof}

Local convergence must be supplemented by rescaled tail tightness:
\begin{equation}
\lim_{M\to\infty}\limsup_{n\to\infty}
\frac1{m_n\tau_n}
\int_{\substack{\theta\in I\\|\theta-\alpha_n|>M\tau_n}}\rho_n(\theta)\dd\theta=0,
\label{eq:local-tail-tightness}
\end{equation}
equivalently,
\begin{equation}
\lim_{M\to\infty}\limsup_{n\to\infty}\int_{|y|>M}h_n(y)\dd y=0.
\label{eq:local-tail-tightness-rescaled}
\end{equation}

\begin{lemma}
\label{lem:global-curvature-tail}
Assume \(V_n'(\alpha_n)=0\) and, for some \(c_0>0\),
\begin{equation}
V_n''(\theta)\le-\frac{c_0}{\tau_n^2}
\qquad(\theta\in(a,b)).
\label{eq:global-curvature-upper}
\end{equation}
Then
\begin{equation}
\frac{\rho_n(\theta)}{m_n}
\le\exp\!\left[-\frac{c_0}{2\tau_n^2}(\theta-\alpha_n)^2\right]
\qquad(\theta\in I),
\label{eq:global-profile-gaussian-upper}
\end{equation}
and
\begin{equation}
\frac1{m_n\tau_n}
\int_{\substack{\theta\in I\\|\theta-\alpha_n|\ge M\tau_n}}\rho_n(\theta)\dd\theta
\le\int_{|y|\ge M}e^{-c_0y^2/2}\dd y.
\label{eq:global-tail-gaussian-bound}
\end{equation}
Hence \eqref{eq:local-tail-tightness} holds.
\end{lemma}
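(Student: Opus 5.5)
The plan is to integrate the uniform curvature bound \eqref{eq:global-curvature-upper} twice, starting from the maximizer, using \iref{lem:integral-taylor-remainder}. Then I will pass to the rescaled variable to bound the tail by a Gaussian integral.

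\textbf{Pointwise bound.} First fix \(\theta\in(a,b)\) and set \(h=\theta-\alpha_n\). Since \(V_n'(\alpha_n)=0\), the exact remainder form in \iref{lem:integral-taylor-remainder} (equivalently \(V_n(\theta)-V_n(\alpha_n)=\int_0^h(h-r)V_n''(\alpha_n+r)\dd r\)) gives
\[
V_n(\theta)-V_n(\alpha_n)=h^2\int_0^1(1-t)V_n''(\alpha_n+th)\dd t\le -\frac{c_0}{\tau_n^2}h^2\int_0^1(1-t)\dd t=-\frac{c_0}{2\tau_n^2}h^2 .
\]
This uses only that \(\alpha_n+th\in(a,b)\) for \(t\in[0,1]\), which holds by convexity of the interval. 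Exponentiating and recalling \(V_n(\alpha_n)=\log m_n\) yields \eqref{eq:global-profile-gaussian-upper} on the open interval \((a,b)\). At the endpoints \(a,b\) the inequality follows by continuity of \(\rho_n\) on \(I\), since the right-hand side is continuous. This boundary extension is the only delicate point. If \(\rho_n\) vanishes there, the bound is trivial anyway.

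\textbf{Tail bound.} Next, substitute \(\theta=\alpha_n+\tau_ny\), so that \(\dd\theta=\tau_n\dd y\). This gives
\[
\frac1{m_n\tau_n}\int_{\substack{\theta\in I\\|\theta-\alpha_n|\ge M\tau_n}}\rho_n\dd\theta=\int_{\{y\in J_n:|y|\ge M\}}\frac{\rho_n(\alpha_n+\tau_ny)}{m_n}\dd y\le\int_{|y|\ge M}e^{-c_0y^2/2}\dd y .
\]
Here the pointwise bound is applied inside \(J_n\), and the domain is then enlarged to all \(|y|\ge M\), which is valid because the integrand is nonnegative. This proves \eqref{eq:global-tail-gaussian-bound}.

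\textbf{Tightness.} Finally, the right-hand side of \eqref{eq:global-tail-gaussian-bound} does not depend on \(n\), so the \(\limsup_{n}\) is bounded by it. Since \(e^{-c_0y^2/2}\) is integrable, this bound tends to \(0\) as \(M\to\infty\) by dominated convergence. The set \(|\theta-\alpha_n|>M\tau_n\) in \eqref{eq:local-tail-tightness} is contained in the set \(|\theta-\alpha_n|\ge M\tau_n\), so \eqref{eq:local-tail-tightness} follows.
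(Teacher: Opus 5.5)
Your proposal is correct and follows the paper's proof essentially step for step: a second-order Taylor bound from the maximizer, extension to the endpoints by continuity, then the substitution $\theta=\alpha_n+\tau_n y$ and an $n$-independent Gaussian tail. The only difference is that you use the integral form of the remainder, while the paper uses the Lagrange form $\frac12V_n''(\xi)(\theta-\alpha_n)^2$; this is immaterial since $V_n\in C^2((a,b))$.
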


\begin{proof}
For \(\theta\in(a,b)\), Taylor's theorem gives \(\xi\) between \(\alpha_n\) and \(\theta\) such that
\[
V_n(\theta)-V_n(\alpha_n)
=\frac12V_n''(\xi)(\theta-\alpha_n)^2
\le-\frac{c_0}{2\tau_n^2}(\theta-\alpha_n)^2.
\]
Exponentiation yields \eqref{eq:global-profile-gaussian-upper}; continuity extends it to endpoints when needed. Then, with \(y=(\theta-\alpha_n)/\tau_n\),
\begin{align*}
\frac1{m_n\tau_n}\int_{|\theta-\alpha_n|\ge M\tau_n}\rho_n(\theta)\dd\theta
&\le\frac1{\tau_n}\int_{|\theta-\alpha_n|\ge M\tau_n}
 e^{-c_0(\theta-\alpha_n)^2/(2\tau_n^2)}\dd\theta\\
&\le\int_{|y|\ge M}e^{-c_0y^2/2}\dd y.
\end{align*}
The Gaussian tail on the right tends to zero as $M\to\infty$, proving \eqref{eq:local-tail-tightness}.
\end{proof}

\subsection{Global \texorpdfstring{\(L^1\)}{L1}-convergence}

\begin{theorem}
\label{thm:local-unnormalized-l1}
Assume \eqref{eq:local-interior-condition}, \eqref{eq:local-curvature-condition}, and \eqref{eq:local-tail-tightness}. Then
\begin{equation}
\int_\R|h_n(y)-e^{-y^2/2}|\dd y\longrightarrow0.
\label{eq:local-hn-l1}
\end{equation}
\end{theorem}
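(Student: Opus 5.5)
The argument is the standard core/tail splitting from Laplace's method. Fix $\varepsilon>0$ and pick $M>0$ large. Split the integral as
\[
\int_\R|h_n(y)-e^{-y^2/2}|\dd y
\le\int_{|y|\le M}|h_n(y)-e^{-y^2/2}|\dd y
+\int_{|y|>M}h_n(y)\dd y
+\int_{|y|>M}e^{-y^2/2}\dd y .
\]
This follows from the triangle inequality on $\{|y|>M\}$, using $h_n\ge0$.

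We take the $\limsup_{n\to\infty}$ of each piece separately.

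\textbf{Core term.} For fixed $M$, \eqref{eq:local-interior-condition} gives $[-M,M]\subset J_n$ for all large $n$. On that interval $h_n(y)=\rho_n(\alpha_n+\tau_n y)/m_n$. \iref{cor:local-profile-limit} gives uniform convergence of $h_n$ to $e^{-y^2/2}$ on $[-M,M]$, which uses \eqref{eq:local-curvature-condition}. Since the interval has finite length, the core integral is at most $2M\sup_{|y|\le M}|h_n(y)-e^{-y^2/2}|$, and this tends to $0$.

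\textbf{Tails.} The Gaussian tail $\int_{|y|>M}e^{-y^2/2}\dd y$ does not depend on $n$ and tends to $0$ as $M\to\infty$. For the profile tail we use the substitution $\theta=\alpha_n+\tau_n y$. Since $h_n$ vanishes off $J_n$,
\[
\int_{|y|>M}h_n(y)\dd y=\frac{1}{m_n\tau_n}\int_{\theta\in I,\ |\theta-\alpha_n|>M\tau_n}\rho_n(\theta)\dd\theta .
\]
This is exactly the equivalence of \eqref{eq:local-tail-tightness} and \eqref{eq:local-tail-tightness-rescaled}.

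\textbf{Conclusion.} Combining the three estimates,
\[
\limsup_{n\to\infty}\int_\R|h_n-e^{-y^2/2}|\dd y\le\limsup_{n\to\infty}\int_{|y|>M}h_n\dd y+\int_{|y|>M}e^{-y^2/2}\dd y .
\]
The left side does not depend on $M$. Letting $M\to\infty$ and applying \eqref{eq:local-tail-tightness-rescaled} shows the right side tends to $0$, which proves \eqref{eq:local-hn-l1}.

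There is no serious obstacle in this argument. The only care needed is in the order of limits: take $\limsup_n$ first with $M$ fixed, then send $M\to\infty$, because the core convergence is uniform only on fixed compact sets. The uniform convergence must also be applied only once $n$ is large enough that $[-M,M]\subset J_n$.
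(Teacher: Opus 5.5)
Your proposal is correct and follows the paper's proof: split into $[-M,M]$, where the uniform convergence of the local profile corollary bounds the core integral by $2M\sup_{|y|\le M}|h_n(y)-e^{-y^2/2}|\to0$, and $\{|y|>M\}$, where the rescaled tail tightness \eqref{eq:local-tail-tightness-rescaled} and the Gaussian tail control the rest. You take the limits in the correct order, first $\limsup_n$ with $M$ fixed and then $M\to\infty$.
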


\begin{proof}
Given $\varepsilon>0$, choose $M$ so that the $h_n$-tail limsup in \eqref{eq:local-tail-tightness-rescaled} plus the Gaussian tail is below $\varepsilon$. On $[-M,M]$, \iref{cor:local-profile-limit} gives
\[
 \int_{-M}^{M}|h_n-e^{-y^2/2}|\dd y
 \le2M\sup_{|y|\le M}|h_n(y)-e^{-y^2/2}|\longrightarrow0.
\]
Splitting the integral into this central part and the two tails yields \eqref{eq:local-hn-l1}.
\end{proof}

The exact change of variables \(\theta=\alpha_n+\tau_ny\) gives
\begin{equation}
\frac{Z_n}{m_n\tau_n}=\int_\R h_n(y)\dd y.
\label{eq:local-normalization-identity}
\end{equation}

\begin{corollary}
\midlabel{corollary}{cor:local-normalization}
Under the preceding assumptions,
\begin{equation}
\frac{Z_n}{m_n\tau_n}\longrightarrow\sqrt{2\pi},
\qquad
Z_n=m_n\tau_n(\sqrt{2\pi}+o(1)).
\label{eq:local-normalization-asymptotic}
\end{equation}
\end{corollary}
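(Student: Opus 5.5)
The plan is to combine the exact change-of-variables identity \eqref{eq:local-normalization-identity} with \Cref{thm:local-unnormalized-l1}. The preceding assumptions are \eqref{eq:local-interior-condition}, \eqref{eq:local-curvature-condition}, and \eqref{eq:local-tail-tightness}.

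First I verify \eqref{eq:local-normalization-identity}. The measure $\nu_n$ and the constant $Z_n$ integrate $\rho_n$ over $I=[a,b]$, and $h_n$ vanishes outside $J_n$. Substituting $\theta=\alpha_n+\tau_ny$, with $d\theta=\tau_n\,dy$, maps $J_n$ bijectively onto $I$. This gives
\[
Z_n=\int_I\rho_n(\theta)\dd\theta=m_n\tau_n\int_{J_n}\frac{\rho_n(\alpha_n+\tau_ny)}{m_n}\dd y=m_n\tau_n\int_\R h_n(y)\dd y .
\]
Here $m_n>0$ and $\tau_n>0$, so the division is legitimate.

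Next I compare with the Gaussian integral $\int_\R e^{-y^2/2}\dd y=\sqrt{2\pi}$. The triangle inequality for integrals gives
\[
\left|\frac{Z_n}{m_n\tau_n}-\sqrt{2\pi}\right|=\left|\int_\R\bigl(h_n(y)-e^{-y^2/2}\bigr)\dd y\right|\le\int_\R|h_n(y)-e^{-y^2/2}|\dd y ,
\]
and the right-hand side tends to $0$ by \eqref{eq:local-hn-l1}. This proves the first limit in \eqref{eq:local-normalization-asymptotic}.

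The second statement is a restatement of the first. Set $\epsilon_n:=Z_n/(m_n\tau_n)-\sqrt{2\pi}$. Then $\epsilon_n\to0$, and $Z_n=m_n\tau_n(\sqrt{2\pi}+\epsilon_n)$.

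I expect no real obstacle, since all the analytic work (local uniform convergence together with tail tightness) is already contained in \Cref{thm:local-unnormalized-l1}. The only point needing care is that $h_n$ is integrable. This holds because it is continuous on the compact interval $J_n$ and zero elsewhere, so both integrals are finite and subtracting them is justified.
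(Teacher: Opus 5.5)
Your proposal is correct and follows the paper's proof: the change-of-variables identity \eqref{eq:local-normalization-identity} together with the triangle inequality reduces the claim to the $L^1$ convergence \eqref{eq:local-hn-l1} from \Cref{thm:local-unnormalized-l1}. Your extra checks, namely that $J_n$ maps bijectively onto $I$ and that $h_n$ is integrable, are details the paper leaves implicit.
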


\begin{proof}
By \eqref{eq:local-normalization-identity},
\[
\left|\frac{Z_n}{m_n\tau_n}-\sqrt{2\pi}\right|
\le\int_\R|h_n(y)-e^{-y^2/2}|\dd y\longrightarrow0.\qedhere
\]
\end{proof}

Define
\[
p_n(y):=\frac{h_n(y)}{c_n},
\qquad
c_n:=\int_\R h_n(s)\dd s,
\]
so \(\int_\R p_n=1\). Write \(\varphi(y):=(2\pi)^{-1/2}e^{-y^2/2}\) and \(\Phi(x):=\int_{-\infty}^{x}\varphi(y)\dd y\).

\begin{theorem}
\midlabel{theorem}{thm:local-gaussian-approximation}
Under \eqref{eq:local-interior-condition}, \eqref{eq:local-curvature-condition}, and \eqref{eq:local-tail-tightness},
\begin{equation}
\int_\R|p_n(y)-\varphi(y)|\dd y\longrightarrow0,
\qquad
d_{\rm TV}(p_n,\varphi)
:=\frac12\int_\R|p_n(y)-\varphi(y)|\dd y\longrightarrow0.
\label{eq:local-pn-l1}
\end{equation}
\end{theorem}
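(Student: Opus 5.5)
The argument reduces everything to \Cref{thm:local-unnormalized-l1} and \iref{cor:local-normalization}. The hypotheses of the theorem are the same three conditions, \eqref{eq:local-interior-condition}, \eqref{eq:local-curvature-condition} and \eqref{eq:local-tail-tightness}. So we may use
\[
\int_\R|h_n(y)-e^{-y^2/2}|\dd y\to0,
\qquad
c_n=\int_\R h_n\dd y=\frac{Z_n}{m_n\tau_n}\to\sqrt{2\pi},
\]
where the second limit comes from \eqref{eq:local-normalization-identity}. In particular $c_n>0$ for every $n$, since $Z_n>0$. Hence $p_n=h_n/c_n$ is a well-defined probability density.

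The main step is the following triangle-inequality split. Insert $e^{-y^2/2}/c_n$ between $p_n$ and $\varphi=e^{-y^2/2}/\sqrt{2\pi}$:
\[
\int_\R|p_n-\varphi|\dd y
\le\frac1{c_n}\int_\R|h_n(y)-e^{-y^2/2}|\dd y
+\left|\frac1{c_n}-\frac1{\sqrt{2\pi}}\right|\int_\R e^{-y^2/2}\dd y.
\]
Because $c_n\to\sqrt{2\pi}>0$, the factor $1/c_n$ stays bounded for large $n$, so the first term tends to zero by \eqref{eq:local-hn-l1}. The second term equals $\sqrt{2\pi}\,|1/c_n-1/\sqrt{2\pi}|$, which tends to zero by the same convergence of $c_n$. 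This gives the $L^1$ statement, and the total variation statement follows by halving.

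The only point needing care is that the normalization $c_n$ is bounded away from zero. This is supplied directly by \iref{cor:local-normalization}, so I expect no real obstacle. The substantive work, namely local uniform convergence combined with tail tightness, is already contained in \Cref{thm:local-unnormalized-l1}.
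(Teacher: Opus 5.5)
Your proposal is correct and matches the paper's proof essentially line for line. Like the paper, you use $c_n\to\sqrt{2\pi}$ from \iref{cor:local-normalization}, split the error by inserting $e^{-y^2/2}/c_n$ and applying the triangle inequality, and bound the two resulting terms by \eqref{eq:local-hn-l1} and the convergence of $c_n$.
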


\begin{proof}
By \iref{cor:local-normalization}, \(c_n\to\sqrt{2\pi}>0\). Hence
\begin{align*}
\int_\R|p_n(y)-\varphi(y)|\dd y
&\le\frac1{c_n}\int_\R|h_n(y)-e^{-y^2/2}|\dd y
+\left|\frac1{c_n}-\frac1{\sqrt{2\pi}}\right|\int_\R e^{-y^2/2}\dd y\\
&=\frac1{c_n}\int_\R|h_n(y)-e^{-y^2/2}|\dd y
+\sqrt{2\pi}\left|\frac1{c_n}-\frac1{\sqrt{2\pi}}\right|\longrightarrow0.
\end{align*}
The total-variation limit is immediate from its definition.
\end{proof}

\subsection{Local masses and affine Gaussian calibration}

\begin{corollary}
\midlabel{corollary}{cor:local-symmetric-mass}
Under the assumptions of \iref{thm:local-gaussian-approximation},
\begin{equation}
\sup_{x\ge0}\left|f_n(\tau_nx)-[2\Phi(x)-1]\right|\longrightarrow0.
\label{eq:local-mass-uniform}
\end{equation}
\end{corollary}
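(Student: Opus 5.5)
The plan is to express $f_n(\tau_nx)$ as an integral of $p_n$ over the symmetric interval $[-x,x]$, and then bound its deviation from $2\Phi(x)-1$ by the $L^1$ distance given in \iref{thm:local-gaussian-approximation}. Since that distance does not depend on $x$, the bound will be uniform in $x\ge0$.

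First, fix $x>0$ and look at the neighborhood. By definition, $U^I_{\tau_nx}(A_n)=I\cap(\alpha_n-\tau_nx,\alpha_n+\tau_nx)$, because $A_n=\{\alpha_n\}$. This identity holds for every $n$ and every $x$, so there is no need for \iref{thm:exact-local-neighborhood}, which only gives the equality for large $n$ depending on $x$. This distinction matters for uniformity.

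Next, substitute $\theta=\alpha_n+\tau_ny$. This gives
\[
\int_{U^I_{\tau_nx}(A_n)}\rho_n(\theta)\dd\theta
= m_n\tau_n\int_{(-x,x)} h_n(y)\dd y ,
\]
since $h_n$ vanishes outside $J_n$. Dividing by $Z_n=m_n\tau_n c_n$ (from \eqref{eq:local-normalization-identity}) yields the exact formula
\[
f_n(\tau_nx)=\int_{-x}^{x}p_n(y)\dd y
\]
for all $n$ and all $x>0$. The case $x=0$ is trivial, because both sides are $0$.

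Finally, $2\Phi(x)-1=\int_{-x}^x\varphi$. Therefore
\[
\sup_{x\ge0}\bigl|f_n(\tau_nx)-[2\Phi(x)-1]\bigr|\le\int_\R|p_n-\varphi|\dd y ,
\]
and the right-hand side tends to $0$ by \iref{thm:local-gaussian-approximation}.

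The only point needing care is uniformity in $x$. It is handled by writing the change of variables as an exact identity valid for every $n$, with the boundary effect absorbed into the support of $h_n$. This avoids any appeal to "sufficiently large $n$" depending on $x$.
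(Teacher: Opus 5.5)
Your proof is correct and is essentially the paper's argument. The paper also uses the exact identity $f_n(\tau_nx)=\int_{-x}^{x}p_n(y)\dd y$ for every $n$ and every $x\ge0$, with the boundary absorbed because $h_n$ (hence $p_n$) vanishes off $J_n$, and then bounds the deviation by the $x$-independent quantity $\int_\R|p_n-\varphi|\dd y$, which tends to zero by Theorem~\ref{thm:local-gaussian-approximation}; your remark that one should avoid Theorem~\ref{thm:exact-local-neighborhood}, whose ``sufficiently large $n$'' depends on $x$, is exactly what makes the bound uniform.
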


\begin{proof}
The change of variables \(\theta=\alpha_n+\tau_ny\), with the indicator \(\mathbf1_{J_n}\) already included in \(p_n\), gives for every \(x\ge0\)
\[
f_n(\tau_nx)=\int_{-x}^{x}p_n(y)\dd y,
\qquad
2\Phi(x)-1=\int_{-x}^{x}\varphi(y)\dd y.
\]
Therefore
\[
\left|f_n(\tau_nx)-[2\Phi(x)-1]\right|
\le\int_{-x}^{x}|p_n-\varphi|\dd y
\le\int_\R|p_n-\varphi|\dd y,
\]
and the last term tends to zero independently of \(x\).
\end{proof}

Since \(I\) is fixed and bounded, the asymptotic-interiority condition
\eqref{eq:local-interior-condition} forces \(\tau_n\to0\). Hence
\eqref{eq:local-mass-uniform}, applied with \(x=\delta/\tau_n\) for fixed
\(\delta>0\), gives the first implication below; the second follows directly
from the same estimate:
\begin{equation}
\delta>0\text{ fixed}\Longrightarrow f_n(\delta)\to1,
\qquad
x\ge0\text{ fixed and }\delta_n=\tau_nx
\Longrightarrow f_n(\delta_n)\to2\Phi(x)-1.
\label{eq:two-concentration-scales}
\end{equation}

For an affine observation, the variance condition below can always be realized by a positive definite covariance. Let \(a_n\neq0\), put \(e_n:=a_n/\|a_n\|\), and recall that \(\tau_n^2=\lambda_n^{-1}\).

\begin{proposition}
\midlabel{proposition}{prop:directional-covariance}
For any \(\varepsilon_n>0\), define
\begin{equation}
\Sigma_n^{(L)}
:=\frac{1}{\lambda_n\|a_n\|^2}e_ne_n^{\mathsf T}
+\varepsilon_n(I_n-e_ne_n^{\mathsf T}).
\label{eq:directional-covariance}
\end{equation}
Then \(\Sigma_n^{(L)}\) is positive definite and
\begin{equation}
a_n^{\mathsf T}\Sigma_n^{(L)}a_n=\tau_n^2=\frac1{\lambda_n},
\qquad
\tr(\Sigma_n^{(L)})=\frac{1}{\lambda_n\|a_n\|^2}+(n-1)\varepsilon_n.
\label{eq:exact-directional-variance}
\end{equation}
\end{proposition}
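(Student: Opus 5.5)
The plan is to read off the spectral decomposition of \(\Sigma_n^{(L)}\) directly from \eqref{eq:directional-covariance}. Write \(P_n:=e_ne_n^{\mathsf T}\). Since \(\|e_n\|=1\), \(P_n\) is the orthogonal projection onto \(\operatorname{span}\{e_n\}\): it is symmetric and \(P_n^2=e_n(e_n^{\mathsf T}e_n)e_n^{\mathsf T}=P_n\). Likewise \(I_n-P_n\) is the complementary orthogonal projection onto \(e_n^\perp\). Thus \(\Sigma_n^{(L)}=\beta_nP_n+\varepsilon_n(I_n-P_n)\) with \(\beta_n:=(\lambda_n\|a_n\|^2)^{-1}\), which is symmetric.

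For positive definiteness, take \(v\neq0\) and decompose \(v=se_n+w\) with \(w\perp e_n\). Then
\[
v^{\mathsf T}\Sigma_n^{(L)}v=\beta_ns^2+\varepsilon_n\|w\|^2.
\]
This is positive because \(\beta_n>0\) (recall \(\lambda_n>0\) and \(a_n\neq0\)), \(\varepsilon_n>0\), and \(s,w\) are not both zero. Equivalently, the eigenvalues are \(\beta_n\) with eigenvector \(e_n\), and \(\varepsilon_n\) with multiplicity \(n-1\) on \(e_n^\perp\).

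For the directional variance, substitute \(a_n=\|a_n\|e_n\). Then \(P_na_n=a_n\) and \((I_n-P_n)a_n=0\), so
\[
a_n^{\mathsf T}\Sigma_n^{(L)}a_n=\beta_n\|a_n\|^2=\lambda_n^{-1}=\tau_n^2,
\]
where the last equality uses \eqref{eq:local-tau}. For the trace, use linearity together with \(\tr(P_n)=e_n^{\mathsf T}e_n=1\) and \(\tr(I_n-P_n)=n-1\). This gives \(\tr(\Sigma_n^{(L)})=\beta_n+(n-1)\varepsilon_n\), which is \eqref{eq:exact-directional-variance}.

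There is no genuine obstacle here. The only point needing care is that \(\varepsilon_n>0\) is required for strict positive definiteness when \(n\ge2\); with \(\varepsilon_n=0\) the matrix would only be positive semidefinite of rank one. When \(n=1\), the complementary term vanishes and the claims still hold, with the trace equal to \(\beta_n\).
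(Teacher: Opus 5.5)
Your proposal is correct and is essentially the paper's proof. Both use the decomposition $v=se_n+w$ with $w\perp e_n$ for positive definiteness, the substitution $a_n=\|a_n\|e_n$ for the directional variance, and the traces $1$ and $n-1$ of the complementary projections $e_ne_n^{\mathsf T}$ and $I_n-e_ne_n^{\mathsf T}$ for the trace formula; you simply write out the computation of $a_n^{\mathsf T}\Sigma_n^{(L)}a_n$, which the paper leaves implicit.
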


\begin{proof}
Write \(w=\inner{w}{e_n}e_n+w_\perp\) with \(w_\perp\perp e_n\). Then
\[
w^{\mathsf T}\Sigma_n^{(L)}w
=\frac{\inner{w}{e_n}^2}{\lambda_n\|a_n\|^2}+\varepsilon_n\|w_\perp\|^2>0
\qquad(w\neq0),
\]
so \(\Sigma_n^{(L)}\) is positive definite. Since $a_n=\|a_n\|e_n$, $e_ne_n^{\mathsf T}$ and $I_n-e_ne_n^{\mathsf T}$ are the corresponding orthogonal projections; their traces are $1$ and $n-1$, which gives \eqref{eq:exact-directional-variance}.
\end{proof}

\begin{proposition}
\label{prop:affine-local-calibration}
Suppose
\[
\Theta_n(z)=\alpha_n+\inner{a_n}{z-\mu_n},
\qquad a_n\neq0,
\]
\(X_n\sim N(\mu_n,\Sigma_n)\), and
\begin{equation}
a_n^{\mathsf T}\Sigma_na_n=\tau_n^2.
\label{eq:affine-variance-calibration}
\end{equation}
Then
\begin{equation}
\Theta_n(X_n)\sim N(\alpha_n,\tau_n^2).
\label{eq:affine-image-normal}
\end{equation}
Moreover,
\begin{equation}
g_n(\tau_nx)=2\Phi(x)-1,
\qquad x\ge0.
\label{eq:affine-gn-local-exact}
\end{equation}
If, in addition, the assumptions of \iref{thm:local-gaussian-approximation} hold, then
\begin{equation}
\sup_{x\ge0}|f_n(\tau_nx)-g_n(\tau_nx)|\longrightarrow0.
\label{eq:affine-local-comparison}
\end{equation}
\end{proposition}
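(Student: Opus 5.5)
The proof has three steps, taken in the order of the statement.

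First, the distribution of \(\Theta_n(X_n)\). Write \(\Theta_n(X_n)=\alpha_n+\inner{a_n}{X_n-\mu_n}\). Since \(X_n-\mu_n\sim N(0,\Sigma_n)\), the scalar \(\inner{a_n}{X_n-\mu_n}=a_n^{\mathsf T}(X_n-\mu_n)\) is a linear image of a Gaussian vector. It is therefore centered Gaussian with variance \(a_n^{\mathsf T}\Sigma_na_n\), which equals \(\tau_n^2\) by \eqref{eq:affine-variance-calibration}. This can be checked via characteristic functions, \(\E e^{it a_n^{\mathsf T}(X_n-\mu_n)}=e^{-t^2a_n^{\mathsf T}\Sigma_na_n/2}\), and the argument covers degenerate \(\Sigma_n\) as well. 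Adding the constant \(\alpha_n\) gives \eqref{eq:affine-image-normal}.

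Second, the exact formula \eqref{eq:affine-gn-local-exact}. Since \(A_n=\{\alpha_n\}\), the definition \eqref{eq:gaussian-gn} reads \(g_n(\tau_nx)=\P(|\Theta_n(X_n)-\alpha_n|<\tau_nx)\); here we use the ambient neighborhood \(\widetilde U\), so no intersection with \(I\) is involved. For \(x>0\), set \(Y:=(\Theta_n(X_n)-\alpha_n)/\tau_n\sim N(0,1)\). Then
\[
g_n(\tau_nx)=\P(|Y|<x)=\Phi(x)-\Phi(-x)=2\Phi(x)-1 .
\]
For \(x=0\), both sides vanish because \(g_n(0)=0\).

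Third, the uniform comparison \eqref{eq:affine-local-comparison}. By the exact formula just obtained, \(f_n(\tau_nx)-g_n(\tau_nx)=f_n(\tau_nx)-[2\Phi(x)-1]\) for every \(x\ge0\). The supremum over \(x\ge0\) therefore tends to zero by \iref{cor:local-symmetric-mass}, which applies under the assumptions of \iref{thm:local-gaussian-approximation}.

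No step is a real obstacle. The one point needing care is that \(g_n\) uses \(\widetilde U_{\tau_nx}(A_n)\) rather than \(U^I_{\tau_nx}(A_n)\). This makes the Gaussian probability exactly \(2\Phi(x)-1\) for every \(n\), with no leakage term, and that exactness is what lets the \(x\)-uniform estimate of \iref{cor:local-symmetric-mass} carry over directly.
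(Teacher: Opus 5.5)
Your proposal is correct and follows the paper's proof step for step: the affine image of \(X_n-\mu_n\sim N(0,\Sigma_n)\) is \(N(0,a_n^{\mathsf T}\Sigma_na_n)=N(0,\tau_n^2)\); standardizing then gives \(g_n(\tau_nx)=\P(|Z|<x)=2\Phi(x)-1\); and the uniform comparison is exactly the uniform local-mass estimate \eqref{eq:local-mass-uniform}. Your extra remarks on degenerate \(\Sigma_n\), the case \(x=0\), and the role of the ambient neighborhood \(\widetilde U_\delta(A_n)\) (which removes any leakage term) are valid details that the paper leaves implicit.
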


\begin{proof}
The affine image of $X_n-\mu_n\sim N(0,\Sigma_n)$ is $N(0,a_n^{\mathsf T}\Sigma_na_n)=N(0,\tau_n^2)$, proving \eqref{eq:affine-image-normal}. Thus $Z=(\Theta_n(X_n)-\alpha_n)/\tau_n\sim N(0,1)$ and $g_n(\tau_nx)=\P(|Z|<x)=2\Phi(x)-1$; the final assertion is \eqref{eq:local-mass-uniform}.
\end{proof}

\section{Euclidean-ball profiles}
\label{sec:ball}

We apply the preceding results to parallel hyperplane sections of a Euclidean ball, the fixed-direction setting considered by Brehm and Voigt; see \refcitenote{Sect.~3}{BrehmVoigt2000}. Let \(n\ge2\), write \(N:=n-1\), and consider
\(B^n(\mu_n,R_n):=\{x\in\R^n:\|x-\mu_n\|\le R_n\}\), where \(R_n>0\). Fix a unit vector \(u_n\in\R^n\) and define
\begin{equation}
\Theta_n(x):=\frac{\pi}{2R_n}\langle u_n,x-\mu_n\rangle.
\label{eq:ball-observation-map}
\end{equation}
Then \(\nabla\Theta_n=(\pi/(2R_n))u_n\), so \(\|\nabla\Theta_n\|=\pi/(2R_n)\) and \(\Theta_n(\mu_n)=0\). Moreover,
\[
|\Theta_n(x)|\le \frac{\pi}{2R_n}\|x-\mu_n\|\le\frac\pi2
\quad (x\in B^n(\mu_n,R_n)),
\]
while for every \(\theta\in[-\pi/2,\pi/2]\), the point \(\mu_n+(2R_n/\pi)\theta u_n\) belongs to the ball and is mapped to \(\theta\). Hence
\begin{equation}
\Theta_n\bigl(B^n(\mu_n,R_n)\bigr)=I:=\left[-\frac\pi2,\frac\pi2\right].
\label{eq:ball-image-interval}
\end{equation}

\subsection{The section profile and fixed-scale concentration}

Fix \(\theta\in I\). The level equation \(\Theta_n(x)=\theta\) is equivalent to \(\langle u_n,x-\mu_n\rangle=(2R_n/\pi)\theta\). Thus every point on the level hyperplane can be written as
\(x-\mu_n=(2R_n/\pi)\theta u_n+z\) with \(z\in u_n^\perp\). Since \(u_n\perp z\),
\[
\|x-\mu_n\|^2
=\frac{4R_n^2}{\pi^2}\theta^2+\|z\|^2
\le R_n^2
\quad\Longleftrightarrow\quad
\|z\|\le R_n\sqrt{1-\frac{4\theta^2}{\pi^2}}.
\]
Consequently, this level section is an \((n-1)\)-ball of radius \(R_n\sqrt{1-4\theta^2/\pi^2}\). If \(\omega_{n-1}\) denotes the volume of the unit ball in \(\R^{n-1}\), then
\begin{align}
&\Vol_{n-1}\!\left(B^n(\mu_n,R_n)\cap\Theta_n^{-1}(\{\theta\})\right) \notag\\
&\qquad=\omega_{n-1}R_n^{n-1}
\left(1-\frac{4\theta^2}{\pi^2}\right)^{N/2}.
\label{eq:ball-section-volume}
\end{align}
Dividing by the central-section volume \(\omega_{n-1}R_n^{n-1}\) gives the normalized profile
\begin{equation}
\rho_n(\theta)=\left(1-\frac{4\theta^2}{\pi^2}\right)^{N/2},
\qquad |\theta|\le\frac\pi2.
\label{eq:ball-profile}
\end{equation}
It is positive on the interior of \(I\), vanishes at the endpoints, and has the unique maximum \(m_n=1\) at \(\alpha_n=0\). Hence \(A_n=\{0\}\).

For fixed \(0<\delta<\pi/2\) and \(c\in(0,1)\), the tail height is
\(\sup_{|\theta|\ge\delta}\rho_n(\theta)=(1-4\delta^2/\pi^2)^{N/2}\).
Moreover, \(\rho_n(\theta)\ge c\) is equivalent to
\(|\theta|\le(\pi/2)\sqrt{1-c^{2/N}}\); this interval lies in \((-\delta,\delta)\) for all sufficiently large \(N\), and hence
\(|L_{n,\delta}^{(c)}|=\pi\sqrt{1-c^{2/N}}\).
Since \(c^{2/N}=\exp(2\log c/N)=1+2\log c/N+O(N^{-2})\),
\begin{equation}
|L_{n,\delta}^{(c)}|
=\pi\sqrt{-\frac{2\log c}{N}}\left(1+O\!\left(\frac1N\right)\right).
\label{eq:ball-core-width-asymptotic}
\end{equation}
Set \(q_\delta:=1-4\delta^2/\pi^2\in(0,1)\). By \iref{prop:tail-to-core}, for some \(C_{\delta,c}>0\),
\begin{equation}
1-f_n(\delta)
\le C_{\delta,c}\sqrt N\,q_\delta^{N/2}
\longrightarrow0.
\label{eq:ball-fixed-scale-limit}
\end{equation}
If \(\delta\ge\pi/2\), then \(f_n(\delta)=1\): when
\(\delta=\pi/2\), the only excluded points are the endpoints, which have
\(\nu_n\)-measure zero. Therefore \(f_n(\delta)\to1\) for every fixed
\(\delta>0\).

\subsection{Normalization, moments, and the two natural scales}

The normalizing constant is
\(Z_n=\int_{-\pi/2}^{\pi/2}(1-4\theta^2/\pi^2)^{N/2}\dd\theta\).
With \(u=2\theta/\pi\) and then \(s=u^2\),
\begin{align}
Z_n
&=\frac\pi2\int_{-1}^{1}(1-u^2)^{N/2}\dd u
=\frac\pi2\int_0^1 s^{-1/2}(1-s)^{N/2}\dd s \notag\\
&=\frac\pi2\,\mathrm B\!\left(\frac12,\frac N2+1\right)
=\frac{\pi^{3/2}}2\frac{\Gamma(N/2+1)}{\Gamma(N/2+3/2)}.
\label{eq:ball-Zn-gamma}
\end{align}

\begin{proposition}
\label{prop:ball-exact-moments}
For every integer \(k\ge1\),
\begin{equation}
\E_{\nu_n}[\theta^{2k-1}]=0,
\qquad
\E_{\nu_n}[\theta^{2k}]
=\left(\frac\pi2\right)^{2k}
\frac{(2k-1)!!}{\prod_{j=1}^{k}(n+2j)}.
\label{eq:ball-even-moments}
\end{equation}
\end{proposition}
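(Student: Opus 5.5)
The odd moments vanish at once. The profile \(\rho_n\) in \eqref{eq:ball-profile} is even on the symmetric interval \(I\), and \(\theta^{2k-1}\) is odd and bounded on \(I\). Hence \(\theta^{2k-1}\rho_n(\theta)\) is integrable and odd, so its integral over \(I\) is zero.

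For the even moments, I would repeat the substitutions used to compute \(Z_n\) in \eqref{eq:ball-Zn-gamma}. Put \(u=2\theta/\pi\) and then \(s=u^2\). This gives
\[
\int_{-\pi/2}^{\pi/2}\theta^{2k}\rho_n(\theta)\dd\theta
=\left(\frac\pi2\right)^{2k+1}\int_0^1 s^{k-1/2}(1-s)^{N/2}\dd s
=\left(\frac\pi2\right)^{2k+1}\mathrm B\!\left(k+\tfrac12,\tfrac N2+1\right).
\]
Dividing by \(Z_n=\frac\pi2\,\mathrm B(\frac12,\frac N2+1)\) and writing each beta function through Gamma functions gives
\[
\E_{\nu_n}[\theta^{2k}]
=\left(\frac\pi2\right)^{2k}
\frac{\Gamma(k+\frac12)}{\Gamma(\frac12)}\cdot
\frac{\Gamma(\frac N2+\frac32)}{\Gamma(\frac N2+k+\frac32)}.
\]

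The remaining step is to simplify the two Gamma ratios using \(\Gamma(x+1)=x\Gamma(x)\).

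First, \(\Gamma(k+\frac12)/\Gamma(\frac12)=\prod_{j=1}^{k}(j-\frac12)=(2k-1)!!/2^k\).

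Second, \(\Gamma(\frac N2+k+\frac32)/\Gamma(\frac N2+\frac32)=\prod_{j=1}^{k}(\frac N2+j+\frac12)=2^{-k}\prod_{j=1}^{k}(N+2j+1)\).

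Since \(N=n-1\), we have \(N+2j+1=n+2j\). The two factors \(2^{-k}\) cancel, which yields \eqref{eq:ball-even-moments}.

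As a check, \(k=1\) gives \(\E_{\nu_n}[\theta^2]=\pi^2/(4(n+2))=\pi^2/(4(N+3))\), matching the exact-variance scale quoted in the introduction.

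There is no real obstacle here. The only point needing care is the index bookkeeping: after the substitution \(s=u^2\), the exponent of \(s\) is \(k-\frac12\), so the first beta argument is \(k+\frac12\), and the shift between \(N\) and \(n\) must be tracked correctly. An alternative is induction on \(k\): integrating by parts gives the recursion \(\E[\theta^{2k}]=(\pi/2)^2\frac{2k-1}{n+2k}\E[\theta^{2k-2}]\), which leads to the same formula.
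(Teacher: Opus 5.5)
Your proposal is correct and follows essentially the same route as the paper. You use symmetry for the odd moments, then the substitutions \(u=2\theta/\pi\) and \(s=u^2\) to reduce the even moment to \(\left(\frac\pi2\right)^{2k}\mathrm B(k+\tfrac12,\tfrac N2+1)/\mathrm B(\tfrac12,\tfrac N2+1)\), followed by the same Gamma-function simplification using \(N+2j+1=n+2j\). Your bookkeeping checks out, including the factor \((\pi/2)^{2k+1}\) before dividing by \(Z_n\), and the integration-by-parts recursion you mention is also valid.
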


\begin{proof}
The odd moments vanish because \(\rho_n\) is even. For an even moment, the substitution \(u=2\theta/\pi\) gives
\[
\E_{\nu_n}[\theta^{2k}]
=\left(\frac\pi2\right)^{2k}
\frac{\int_0^1u^{2k}(1-u^2)^{N/2}\dd u}
{\int_0^1(1-u^2)^{N/2}\dd u}.
\]
Putting \(s=u^2\), the numerator and denominator become one half of
\(\mathrm B(k+1/2,N/2+1)\) and \(\mathrm B(1/2,N/2+1)\), respectively. Hence
\begin{align*}
\frac{\mathrm B(k+1/2,N/2+1)}{\mathrm B(1/2,N/2+1)}
&=\frac{\Gamma(k+1/2)}{\Gamma(1/2)}
\frac{\Gamma(N/2+3/2)}{\Gamma(N/2+k+3/2)}\\
&=\frac{(2k-1)!!}{2^k}
\frac{2^k}{\prod_{j=1}^{k}(N+2j+1)}
=\frac{(2k-1)!!}{\prod_{j=1}^{k}(n+2j)},
\end{align*}
which proves \eqref{eq:ball-even-moments}.
\end{proof}

In particular,
\begin{equation}
\Var_{\nu_n}(\theta)=\frac{\pi^2}{4(n+2)},
\qquad
\E_{\nu_n}[\theta^4]=\frac{3\pi^4}{16(n+2)(n+4)},
\label{eq:ball-variance-fourth}
\end{equation}
so the standardized fourth moment is \(3(n+2)/(n+4)\) and the excess kurtosis is
\begin{equation}
-\frac6{n+4}.
\label{eq:ball-excess-kurtosis}
\end{equation}

The logarithmic profile is defined only on \((-\pi/2,\pi/2)\). Set
\(V_n(\theta):=\log\rho_n(\theta)\). Thus
\(V_n(\theta)=\frac N2\log(1-4\theta^2/\pi^2)\), and direct differentiation gives
\begin{align*}
V_n'(\theta)&=-\frac{4N\theta}{\pi^2-4\theta^2},\\
V_n''(\theta)&=-\frac{4N(\pi^2+4\theta^2)}{(\pi^2-4\theta^2)^2},
\end{align*}
so \(V_n'(0)=0\) and \(-V_n''(0)=4N/\pi^2\). Therefore
\begin{equation}
\lambda_n=\frac{4N}{\pi^2},
\qquad
\tau_n=\lambda_n^{-1/2}=\frac\pi{2\sqrt N}=\frac\pi{2\sqrt{n-1}}.
\label{eq:ball-tau}
\end{equation}
The exact profile variance and the curvature variance are not identical:
\begin{equation}
\Var_{\nu_n}(\theta)=\frac{\pi^2}{4(n+2)},
\qquad
\tau_n^2=\frac{\pi^2}{4(n-1)},
\qquad
\frac{\Var_{\nu_n}(\theta)}{\tau_n^2}=\frac{n-1}{n+2}\to1.
\label{eq:ball-two-variances}
\end{equation}

This distinction persists in the physical axial coordinate \(s:=\langle u_n,x-\mu_n\rangle=(2R_n/\pi)\theta\). If \(X_n\) is uniformly distributed in \(B^n(\mu_n,R_n)\) and \(S_n:=\langle u_n,X_n-\mu_n\rangle\), then Fubini's theorem and \eqref{eq:ball-section-volume} give the density
\begin{equation}
q_n(s):=
\begin{cases}
\displaystyle
\frac{\omega_{n-1}(R_n^2-s^2)^{N/2}}{\omega_nR_n^n},
& |s|\le R_n,\\[5pt]
0, & |s|>R_n.
\end{cases}
\label{eq:ball-axial-marginal-density}
\end{equation}
Under the map \(s\mapsto\theta=\pi s/(2R_n)\), the probability measure
with density \(q_n\) is pushed forward to \(\nu_n\). This is the
one-dimensional instance of the marginal identity of Eldan and Klartag
\refcitenote{Eq.~(5)}{EldanKlartag2008}; taking
\(f=(\omega_nR_n^n)^{-1}\mathbf 1_{B^n(\mu_n,R_n)}\) and
\(E=\operatorname{span}\{u_n\}\) gives
\eqref{eq:ball-axial-marginal-density}. Therefore
\begin{equation}
\Var(S_n)
=\left(\frac{2R_n}{\pi}\right)^2\Var_{\nu_n}(\theta)
=\frac{R_n^2}{n+2}.
\label{eq:ball-physical-exact-variance}
\end{equation}
By contrast, the curvature scale in the physical coordinate is
\begin{equation}
\left(\frac{2R_n}{\pi}\right)\tau_n=\frac{R_n}{\sqrt{n-1}},
\qquad
\left(\frac{2R_n}{\pi}\right)^2\tau_n^2=\frac{R_n^2}{n-1}.
\label{eq:ball-physical-curvature-scale}
\end{equation}
Thus \(R_n^2/(n+2)\) is the exact marginal variance, whereas \(R_n^2/(n-1)\) is the variance of the curvature-matched Gaussian.

\begin{remark}
If $\theta\sim\nu_n$ and $W_n=1/2+\theta/\pi$, then $W_n\sim\operatorname{Beta}((N+2)/2,(N+2)/2)$ because $1-4\theta^2/\pi^2=4W_n(1-W_n)$. This is Ouimet's symmetric Dirichlet case \refcite{Ouimet2022}; its exact-variance and curvature calibrations agree to leading order but differ at order $N^{-1}$.
\end{remark}

\subsection{Curvature rescaling and the Gaussian limit}

For fixed $M>0$ and $|y|\le M$,
$\lambda_n^{-1}|V_n''(\tau_ny)-V_n''(0)|=|(1+y^2/N)/(1-y^2/N)^2-1|=O_M(N^{-1})$, so \eqref{eq:local-curvature-condition} holds. Also $\dist(0,\partial I)/\tau_n=\sqrt N\to\infty$, which gives asymptotic interiority.

Set \(y=\theta/\tau_n=(2\sqrt N/\pi)\theta\) and define the zero-extended rescaled profile
\begin{equation}
h_N(y):=
\begin{cases}
\displaystyle\left(1-\frac{y^2}{N}\right)^{N/2},
& |y|\le\sqrt N,\\[4pt]
0, & |y|>\sqrt N.
\end{cases}
\label{eq:ball-hN}
\end{equation}
For \(|y|<\sqrt N\), the inequality \(\log(1-t)\le-t\) gives
\[
\log h_N(y)=\frac N2\log\left(1-\frac{y^2}{N}\right)\le-\frac{y^2}{2},
\]
and therefore
\begin{equation}
0\le h_N(y)\le e^{-y^2/2}
\qquad (y\in\R).
\label{eq:ball-gaussian-domination}
\end{equation}
In particular,
\[
\sup_N\int_{|y|>M}h_N(y)\dd y
\le\int_{|y|>M}e^{-y^2/2}\dd y\longrightarrow0,
\]
so the rescaled tail-tightness condition is automatic.

Let \(D_N:=\int_\R h_N(y)\dd y\). The substitutions \(u=y/\sqrt N\) and \(s=u^2\) give
\begin{align}
D_N
&=\sqrt N\int_{-1}^{1}(1-u^2)^{N/2}\dd u
=\sqrt N\,\mathrm B\!\left(\frac12,\frac N2+1\right) \notag\\
&=\sqrt{\pi N}\frac{\Gamma(N/2+1)}{\Gamma(N/2+3/2)}.
\label{eq:ball-DN-gamma}
\end{align}
The normalized rescaled density is
\begin{equation}
p_n(y):=\frac{h_N(y)}{D_N}.
\label{eq:ball-pn}
\end{equation}

\begin{proposition}
\midlabel{proposition}{prop:ball-gaussian-limit}
As \(n\to\infty\),
\begin{equation}
\int_\R|p_n(y)-\varphi(y)|\dd y\longrightarrow0,
\qquad
d_{\rm TV}(p_n,\varphi)\longrightarrow0.
\label{eq:ball-pn-l1}
\end{equation}
\end{proposition}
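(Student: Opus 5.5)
The plan is to apply \iref{thm:local-gaussian-approximation} directly to the ball profile \eqref{eq:ball-profile}, with \(I=[-\pi/2,\pi/2]\), \(A_n=\{0\}\), \(\alpha_n=0\) and \(\tau_n=\pi/(2\sqrt N)\) from \eqref{eq:ball-tau}. First I check that the standing assumptions of \Cref{sec:local} hold. We have \(\rho_n>0\) on \((-\pi/2,\pi/2)\), \(V_n=\frac N2\log(1-4\theta^2/\pi^2)\in C^2\) there, and \(V_n''(0)=-4N/\pi^2<0\). Next, the three hypotheses of \iref{thm:local-gaussian-approximation} have essentially been verified in the preceding paragraphs.

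\emph{Asymptotic interiority} \eqref{eq:local-interior-condition} holds because \(\dist(0,\partial I)/\tau_n=\sqrt N\to\infty\). Alternatively, it follows from \Cref{lem:uniform-interior} with \(d_0=\pi/2\).

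\emph{Relative curvature} \eqref{eq:local-curvature-condition} follows from the explicit formula for \(V_n''\). With \(\theta=\tau_n y\) one gets \(4\theta^2/\pi^2=y^2/N\), so
\[
\omega_n(M)=\sup_{|y|\le M}\left|\frac{1+y^2/N}{(1-y^2/N)^2}-1\right|.
\]
For \(N>2M^2\), say, this is at most \(C M^2/N\to0\). The only care needed here is to take \(N\) large enough that \(M^2/N\) stays away from \(1\).

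\emph{Tail tightness} \eqref{eq:local-tail-tightness} is equivalent to \eqref{eq:local-tail-tightness-rescaled}. Here the rescaled profile \(h_n\) of \eqref{eq:local-hn} coincides with \(h_N\) in \eqref{eq:ball-hN}, since \(m_n=1\) and \(J_n=[-\sqrt N,\sqrt N]\). The domination \eqref{eq:ball-gaussian-domination} then gives
\[
\int_{|y|>M}h_N\le\int_{|y|>M}e^{-y^2/2},
\]
uniformly in \(N\), and this tends to \(0\) as \(M\to\infty\).

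Having these hypotheses, \iref{thm:local-gaussian-approximation} yields \(\int_\R|h_N/c_n-\varphi|\to0\) with \(c_n=\int h_N\). It remains to note that \(c_n=D_N\), so \(h_N/c_n\) is exactly \(p_n\) of \eqref{eq:ball-pn}. The total variation statement is then immediate, being half the \(L^1\) distance.

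As a cross-check, or an alternative self-contained route, I would use dominated convergence. One has \(h_N(y)\to e^{-y^2/2}\) pointwise, and \(0\le h_N\le e^{-y^2/2}\) is integrable. Hence \(\int|h_N-e^{-y^2/2}|\to0\) and \(D_N\to\sqrt{2\pi}\), and the normalization step is the same as in the proof of \iref{thm:local-gaussian-approximation}.

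I do not expect a genuine obstacle. The only points needing attention are the identification \(h_n=h_N\), including the zero extension outside \(J_n\), and the bound on \(\omega_n(M)\) for large \(N\).
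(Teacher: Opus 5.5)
Your proposal is correct and matches the paper's proof. The paper likewise checks asymptotic interiority ($\dist(0,\partial I)/\tau_n=\sqrt N$), the relative-curvature condition via the explicit ratio $(1+y^2/N)/(1-y^2/N)^2$, and rescaled tail tightness via \eqref{eq:ball-gaussian-domination}, and then invokes \Cref{thm:local-gaussian-approximation}. Your dominated-convergence cross-check is also valid, and it is essentially the same argument specialized: $0\le h_N\le e^{-y^2/2}$ together with pointwise convergence already gives $\int_\R|h_N-e^{-y^2/2}|\dd y\to0$ and $D_N\to\sqrt{2\pi}$.
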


\begin{proof}
The relative-curvature and interiority conditions were verified above, and \eqref{eq:ball-gaussian-domination} gives the required rescaled tail tightness. Thus \iref{thm:local-gaussian-approximation} applies to \eqref{eq:ball-pn}.
\end{proof}

Sodin's related spherical result gives quantitative Gaussian asymptotics on the growing regime $t=o(n^{1/4})$ \refcitenote{Lemma~1}{Sodin2007}. Here \iref{prop:ball-gaussian-limit} is global in $L^1$ at the curvature scale, and the expansion below identifies the $N^{-1}$ discrepancy.

The exact moments are also consistent with this Gaussian limit. From \(y=\theta/\tau_n\) and \eqref{eq:ball-even-moments},
\begin{equation}
\int_\R y^2p_n(y)\dd y=\frac{N}{N+3}\to1,
\qquad
\int_\R y^4p_n(y)\dd y=\frac{3N^2}{(N+3)(N+5)}\to3.
\label{eq:ball-rescaled-moments}
\end{equation}

\subsection{Gaussian calibration at the curvature scale}

For the affine observation \eqref{eq:ball-observation-map}, \(a_n:=\nabla\Theta_n=(\pi/(2R_n))u_n\) and \(e_n=u_n\). Set \(\varepsilon_n:=R_n^2/(n^2N)\). Substitution into \eqref{eq:directional-covariance} gives
\begin{equation}
\Sigma_n^{(L)}
=\frac{R_n^2}{N}u_nu_n^{\mathsf T}
+\frac{R_n^2}{n^2N}(I_n-u_nu_n^{\mathsf T}).
\label{eq:ball-covariance}
\end{equation}
By \iref{prop:directional-covariance}, this matrix is positive definite and
\begin{align}
a_n^{\mathsf T}\Sigma_n^{(L)}a_n
&=\tau_n^2=\frac{\pi^2}{4N}, \notag\\
\tr(\Sigma_n^{(L)})
&=\frac{R_n^2}{N}+\frac{R_n^2}{n^2}
=\frac{R_n^2}{N}\left(1+\frac{N}{n^2}\right).
\label{eq:ball-covariance-calibration}
\end{align}
Moreover, since \(K_n=\pi/(2R_n)\), \eqref{eq:ball-covariance-calibration} gives
\[
K_n^2\tr(\Sigma_n^{(L)})
=\frac{\pi^2}{4}\left(\frac1N+\frac1{n^2}\right)
\longrightarrow0.
\]
Thus the fixed-scale Gaussian concentration condition is satisfied independently of the growth of \(R_n\).

If $X_n\sim N(\mu_n,\Sigma_n^{(L)})$, then \Cref{prop:affine-local-calibration} gives $\Theta_n(X_n)\sim N(0,\tau_n^2)$ and hence
\begin{equation}
 g_n(\tau_nx)=2\Phi(x)-1,
 \qquad x\ge0.
 \label{eq:ball-gn-local-exact}
\end{equation}
Also, for every fixed $\delta>0$,
$g_n(\delta)=2\Phi(\delta/\tau_n)-1\to1$ because $\tau_n\to0$.
Combining this with \eqref{eq:ball-fixed-scale-limit} and using
$|f_n(\delta)-g_n(\delta)|\le |1-f_n(\delta)|+|1-g_n(\delta)|$, we obtain
$|f_n(\delta)-g_n(\delta)|\to0$.

\subsection{Exact local masses}

For $x\ge0$, write $F_n(x)=f_n(\tau_nx)=\int_{-x}^{x}p_n(y)\dd y$. For $0\le x\le\sqrt N$, the substitution $s=y^2/N$ gives
\begin{equation}
F_n(x)=I_{x^2/N}\!\left(\frac12,\frac N2+1\right),
\qquad 0\le x\le\sqrt N,
\label{eq:ball-exact-symmetric-mass}
\end{equation}
where $I_z(a,b)$ is the regularized incomplete beta function; for $x\ge\sqrt N$, $F_n(x)=1$.

\subsection{First-order density asymptotics}

The estimate \refcitenote{Theorem~3.1}{BrehmVoigt2000} of Brehm and
Voigt gives an \(O(N^{-1})\) upper bound for the one-dimensional
Euclidean-ball marginal in \(L^1\). To identify the leading density error,
we first establish an \(L^1\)-expansion of the unnormalized profile.

\begin{lemma}
\midlabel{lemma}{lem:ball-unnormalized-first-order}
As \(N\to\infty\),
\begin{equation}
N\bigl(h_N-e^{-y^2/2}\bigr)
\longrightarrow-\frac{y^4}{4}e^{-y^2/2}
\qquad\text{in }L^1(\R).
\label{eq:ball-unnormalized-first-order}
\end{equation}
\end{lemma}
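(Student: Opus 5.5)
Our plan is to prove pointwise convergence together with an integrable domination and then apply dominated convergence. Write
\[
G_N(y):=N\bigl(h_N(y)-e^{-y^2/2}\bigr),
\qquad
G(y):=-\frac{y^4}{4}e^{-y^2/2}.
\]
By \eqref{eq:ball-gaussian-domination}, $G_N\le0$ everywhere.

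\emph{Pointwise limit.} Fix $y$. For $N>y^2$, put $t=y^2/N$ and use $\log(1-t)=-t-t^2/2-R(t)$ with $0\le R(t)\le t^3/(3(1-t))$. This gives
\[
\log h_N(y)=-\frac{y^2}{2}-\frac{y^4}{4N}-\frac N2R(y^2/N),
\]
and the last term is $O(y^6/N^2)$. Hence $h_N(y)=e^{-y^2/2}\exp(-\epsilon_N(y))$ with $\epsilon_N(y)=y^4/(4N)+O(y^6/N^2)\ge0$. Using $1-e^{-\epsilon}=\epsilon+O(\epsilon^2)$ we get $G_N(y)\to G(y)$ for every fixed $y$.

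\emph{Domination.} From $\epsilon_N\ge0$ and $1-e^{-\epsilon}\le\epsilon$ we have
\[
|G_N(y)|=Ne^{-y^2/2}\bigl(1-e^{-\epsilon_N(y)}\bigr)\le N\epsilon_N(y)\,e^{-y^2/2}.
\]
We split at $|y|\le\sqrt N/2$, so that $t\le1/4$.

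\begin{itemize}
\item On $|y|\le\sqrt N/2$, the bound $R(t)\le t^3/(3(1-t))\le \tfrac49 t^3$ together with $t\le1/4$ gives $N\epsilon_N(y)\le y^4/4+C y^6/N\le C'y^4$. So $|G_N|\le C'y^4e^{-y^2/2}$, which is integrable.
\item On $|y|>\sqrt N/2$ (this includes $|y|\ge\sqrt N$, where $h_N=0$), we use the crude bound $|G_N|\le Ne^{-y^2/2}\le 4y^2e^{-y^2/2}$, since $N<4y^2$ there. This is also integrable.
\end{itemize}

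Thus $|G_N|\le C(y^2+y^4)e^{-y^2/2}$ for all $y$ and all large $N$. Dominated convergence then gives $\int_\R|G_N-G|\,dy\to0$, which is \eqref{eq:ball-unnormalized-first-order}.

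The main obstacle is the domination near and beyond the edge $|y|\approx\sqrt N$. There the Taylor remainder for $\log(1-t)$ blows up, so the series expansion cannot be used. The fix is exactly the split above: Gaussian decay $e^{-y^2/2}$ absorbs the factor $N\lesssim y^2$ on the outer region, and only the inner region needs the quantitative remainder bound.
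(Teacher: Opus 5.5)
Your proof is correct, but it handles the edge region differently from the paper. The paper also splits near the edge (at $|y|=\sqrt{N/2}$).

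\emph{The paper's route.} On the central region it proves the explicit bound $|G_N-G|\le N^{-1}(y^6/3+y^8/32)e^{-y^2/2}$, using both $|e^{-a-r}-e^{-a}|\le r$ and $|e^{-a}-1+a|\le a^2/2$. On the outer region it uses $|G_N-G|\le(2N+y^4/4)e^{-y^2/2}$. It then bounds $\int_R^\infty e^{-y^2/2}\,dy$ and $\int_R^\infty y^4e^{-y^2/2}\,dy$ by integration by parts, which makes the outer contribution $O(\sqrt N e^{-N/4})$.

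\emph{Your route.} You never control $G_N-G$ directly on either region. You use the sign $G_N\le0$ and the one-sided bound $1-e^{-\epsilon}\le\epsilon$ to get a crude majorant $C'y^4e^{-y^2/2}$ inside. Outside, you absorb the dangerous factor $N$ into the weight via $N<4y^2$. This gives a single $N$-independent majorant $C(y^2+y^4)e^{-y^2/2}$ on all of $\R$, so pointwise convergence plus one application of dominated convergence finishes.

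\emph{What each buys.} Your argument is shorter and avoids the tail-integral computations entirely. The paper's argument is quantitative: its bounds give $\|G_N-G\|_{L^1}=O(N^{-1})$, i.e.\ an $O(N^{-2})$ remainder in the $L^1$ expansion of $h_N$. Your dominated-convergence argument does not give that rate, but the lemma as stated does not require it.
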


\begin{proof}
Split \(\R\) into \(|y|\le\sqrt{N/2}\) and its complement. On the central region set \(s=y^2/N\in[0,1/2]\). Since
\[
-\log(1-s)=s+\frac{s^2}{2}+R(s),
\qquad
0\le R(s)=\sum_{k=3}^{\infty}\frac{s^k}{k}
\le\frac{s^3}{3(1-s)}\le\frac{2s^3}{3},
\]
we obtain
\begin{equation}
\frac N2\log(1-s)
=-\frac{y^2}{2}-\frac{y^4}{4N}-r_N(y),
\qquad
0\le r_N(y)\le\frac{y^6}{3N^2}.
\label{eq:ball-log-first-order}
\end{equation}
Hence
\(h_N(y)=e^{-y^2/2}\exp[-y^4/(4N)-r_N(y)]\). Put \(a=y^4/(4N)\). The elementary estimates \(|e^{-a-r}-e^{-a}|\le r\) and \(|e^{-a}-1+a|\le a^2/2\) for \(a,r\ge0\) give
\begin{align*}
&\left|N\bigl(h_N(y)-e^{-y^2/2}\bigr)+\frac{y^4}{4}e^{-y^2/2}\right|\\
&\qquad\le e^{-y^2/2}\left(Nr_N(y)+\frac{N a^2}{2}\right)
\le\frac1N\left(\frac{y^6}{3}+\frac{y^8}{32}\right)e^{-y^2/2}.
\end{align*}
After extending the central-region error by zero outside
\(\{|y|\le\sqrt{N/2}\}\), it is dominated by
\[
\left(\frac{y^6}{3}+\frac{y^8}{32}\right)e^{-y^2/2},
\]
and converges pointwise to zero. Hence dominated convergence shows that its
integral over the central region tends to zero.

On \(|y|>\sqrt{N/2}\), \eqref{eq:ball-gaussian-domination} gives
\[
\left|N(h_N-e^{-y^2/2})+\frac{y^4}{4}e^{-y^2/2}\right|
\le\left(2N+\frac{y^4}{4}\right)e^{-y^2/2}.
\]
Put \(R:=\sqrt{N/2}\). For \(R\ge1\), integration by parts gives
\begin{align*}
\int_R^\infty e^{-y^2/2}\dd y
&\le R^{-1}e^{-R^2/2},\\
\int_R^\infty y^4e^{-y^2/2}\dd y
&=R^3e^{-R^2/2}+3\int_R^\infty y^2e^{-y^2/2}\dd y\\
&\le (R^3+3R+3R^{-1})e^{-R^2/2}.
\end{align*}
Consequently,
\[
\int_{|y|>R}\left(2N+\frac{y^4}{4}\right)e^{-y^2/2}\dd y
\le C\,(NR^{-1}+R^3+R+R^{-1})e^{-R^2/2}\longrightarrow0.
\]
Together with the central-region estimate, this proves \eqref{eq:ball-unnormalized-first-order}.
\end{proof}

\begin{corollary}
\midlabel{corollary}{cor:ball-normalization-expansion}
As \(N\to\infty\),
\begin{equation}
D_N=\sqrt{2\pi}\left(1-\frac{3}{4N}+o(N^{-1})\right).
\label{eq:ball-DN-expansion}
\end{equation}
\end{corollary}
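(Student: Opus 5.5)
The plan is to integrate the $L^1$ expansion of \iref{lem:ball-unnormalized-first-order} over $\R$. The Gaussian terms are then computed explicitly, and the $L^1$ convergence controls the remainder. Recall that $D_N=\int_\R h_N(y)\dd y$ and $\int_\R e^{-y^2/2}\dd y=\sqrt{2\pi}$. Integration is a bounded linear functional on $L^1(\R)$, so it commutes with $L^1$ limits:
\[
\left|N\bigl(D_N-\sqrt{2\pi}\bigr)+\int_\R\frac{y^4}{4}e^{-y^2/2}\dd y\right|
\le\int_\R\left|N\bigl(h_N-e^{-y^2/2}\bigr)+\frac{y^4}{4}e^{-y^2/2}\right|\dd y\longrightarrow0 .
\]

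Next, I would compute the constant using the standard Gaussian fourth moment. Since $\int_\R y^4\varphi(y)\dd y=3$, we have $\int_\R y^4e^{-y^2/2}\dd y=3\sqrt{2\pi}$, and therefore
\[
N\bigl(D_N-\sqrt{2\pi}\bigr)\longrightarrow-\frac{3\sqrt{2\pi}}{4}.
\]
This is the same as $D_N=\sqrt{2\pi}-\frac{3\sqrt{2\pi}}{4N}+o(N^{-1})$. Factoring out $\sqrt{2\pi}$ gives \eqref{eq:ball-DN-expansion}.

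There is no real obstacle here: the difficult analytic work, namely dominated convergence on the core and the tail estimates, was already done in \iref{lem:ball-unnormalized-first-order}. As an independent check, I would compare with the exact formula \eqref{eq:ball-DN-gamma}. The standard expansion $\Gamma(x+1/2)/\Gamma(x)=\sqrt{x}\,(1-\frac1{8x}+O(x^{-2}))$, applied with $x=N/2+1$, gives
\[
\frac{\Gamma(N/2+1)}{\Gamma(N/2+3/2)}=\sqrt{\frac2N}\left(1-\frac1N+\frac1{4N}+O(N^{-2})\right).
\]
Multiplying by $\sqrt{\pi N}$ gives $\sqrt{2\pi}\,(1-\frac{3}{4N}+O(N^{-2}))$, which agrees with the expansion above and even improves $o(N^{-1})$ to $O(N^{-2})$. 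That improvement is not needed for the statement.
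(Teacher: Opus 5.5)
Your proof is correct and is essentially the paper's own argument: integrate the $L^1$ limit from \iref{lem:ball-unnormalized-first-order} over $\R$ and use $\int_\R y^4e^{-y^2/2}\dd y=3\sqrt{2\pi}$. Your independent check via the Gamma-ratio expansion of \eqref{eq:ball-DN-gamma} is also correct and sharpens the remainder to $O(N^{-2})$, which the paper does not need.
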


\begin{proof}
Integrating \eqref{eq:ball-unnormalized-first-order} and using
\(\int_\R y^4e^{-y^2/2}\dd y=3\sqrt{2\pi}\) gives
\[
N(D_N-\sqrt{2\pi})\longrightarrow-\frac34\sqrt{2\pi},
\]
which is equivalent to \eqref{eq:ball-DN-expansion}.
\end{proof}

Define
\begin{equation}
Q(y):=\frac{3-y^4}{4}\varphi(y).
\label{eq:ball-Q}
\end{equation}

\begin{theorem}
\midlabel{theorem}{thm:ball-first-order-density}
As \(n\to\infty\),
\begin{equation}
N(p_n-\varphi)\longrightarrow Q
\qquad\text{in }L^1(\R).
\label{eq:ball-first-order-density}
\end{equation}
\end{theorem}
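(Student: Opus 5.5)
The plan is to derive the normalized expansion from the unnormalized one in \iref{lem:ball-unnormalized-first-order}, together with the normalization expansion in \iref{cor:ball-normalization-expansion}. We write $p_n=h_N/D_N$ and $\varphi=e^{-y^2/2}/\sqrt{2\pi}$, and decompose
\[
N(p_n-\varphi)
=\frac{1}{D_N}\,N\bigl(h_N-e^{-y^2/2}\bigr)
+N\left(\frac1{D_N}-\frac1{\sqrt{2\pi}}\right)e^{-y^2/2}.
\]
We then identify the $L^1$ limit of each term separately.

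For the first term, \iref{cor:ball-normalization-expansion} gives $D_N\to\sqrt{2\pi}$. We combine this with \eqref{eq:ball-unnormalized-first-order}. Since $N(h_N-e^{-y^2/2})$ converges in $L^1$, its $L^1$ norms are bounded, and the factor $1/D_N$ is a scalar. Therefore
\[
\frac1{D_N}N(h_N-e^{-y^2/2})\longrightarrow-\frac{y^4}{4\sqrt{2\pi}}e^{-y^2/2}=-\frac{y^4}{4}\varphi(y)
\]
in $L^1(\R)$. Explicitly, we bound the difference by $|1/D_N-1/\sqrt{2\pi}|\,\|N(h_N-e^{-y^2/2})\|_{L^1}$ plus $(1/\sqrt{2\pi})\|N(h_N-e^{-y^2/2})+\tfrac{y^4}{4}e^{-y^2/2}\|_{L^1}$, and both pieces tend to zero.

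For the second term, \eqref{eq:ball-DN-expansion} gives
\[
\frac1{D_N}=\frac1{\sqrt{2\pi}}\left(1+\frac{3}{4N}+o(N^{-1})\right).
\]
Hence $N(1/D_N-1/\sqrt{2\pi})\to 3/(4\sqrt{2\pi})$. This is a scalar sequence multiplying the fixed integrable function $e^{-y^2/2}$, so the product converges in $L^1$ to $\tfrac34\varphi$.

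Adding the two limits gives $\tfrac{3-y^4}{4}\varphi=Q$, which is \eqref{eq:ball-first-order-density}. As a consistency check, $\int Q=0$, as it must be for a limit of differences of probability densities.

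There is no real obstacle here; all the analytic work, namely the domination on the central region and the tail estimate, is contained in \iref{lem:ball-unnormalized-first-order}. The only point requiring care is that the expansion of $1/D_N$ really has $o(N^{-1})$ remainder. This follows by inverting $1-\tfrac{3}{4N}+o(N^{-1})$, which is legitimate because that quantity tends to $1$.
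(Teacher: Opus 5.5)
Your proposal is correct and follows the paper's proof essentially verbatim. You use the same decomposition of $N(p_n-\varphi)$ into $N(h_N-e^{-y^2/2})/D_N$ plus the scalar multiple $N(1/D_N-1/\sqrt{2\pi})\,e^{-y^2/2}$, treating the first term with the unnormalized $L^1$ lemma and the second with the expansion of $D_N$; your explicit triangle-inequality bound for the first term only spells out a step the paper leaves implicit.
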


\begin{proof}
Write \(e(y):=e^{-y^2/2}\). Since \(p_n=h_N/D_N\) and \(\varphi=e/\sqrt{2\pi}\),
\begin{equation}
N(p_n-\varphi)
=\frac{N(h_N-e)}{D_N}
+Ne\left(\frac1{D_N}-\frac1{\sqrt{2\pi}}\right).
\label{eq:ball-density-decomposition}
\end{equation}
The first term converges in \(L^1\) to \(-y^4\varphi(y)/4\) by \iref{lem:ball-unnormalized-first-order}. Moreover, \iref{cor:ball-normalization-expansion} gives
\[
\frac1{D_N}
=\frac1{\sqrt{2\pi}}\left(1+\frac{3}{4N}+o(N^{-1})\right),
\]
so the second term converges in \(L^1\) to \(3\varphi/4\). Adding the two limits in \eqref{eq:ball-density-decomposition} proves \eqref{eq:ball-first-order-density}.
\end{proof}

\begin{corollary}
\label{cor:ball-local-uniform-expansion}
For every fixed \(M>0\),
\begin{equation}
p_n(y)=\varphi(y)\left[1+\frac{3-y^4}{4N}+o_M(N^{-1})\right]
\qquad (|y|\le M),
\label{eq:ball-local-density-expansion}
\end{equation}
where the remainder is uniform on \([-M,M]\).
\end{corollary}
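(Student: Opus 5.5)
The expansion \eqref{eq:ball-local-density-expansion} is a pointwise restatement of the algebra behind \iref{thm:ball-first-order-density}. I will obtain it by combining a uniform version of the central-region estimate from \iref{lem:ball-unnormalized-first-order} with the normalization expansion of \iref{cor:ball-normalization-expansion}. The $L^1$ convergence itself does not give uniform pointwise control, so I will not try to extract the corollary from \eqref{eq:ball-first-order-density}. Instead I will go back to the explicit bound established inside the proof of the lemma.

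Fix $M>0$ and take $N\ge 2M^2$, so that $[-M,M]\subset\{|y|\le\sqrt{N/2}\}$. On this set the proof of \iref{lem:ball-unnormalized-first-order} gives the explicit estimate
\[
\left|N\bigl(h_N(y)-e^{-y^2/2}\bigr)+\frac{y^4}{4}e^{-y^2/2}\right|\le\frac1N\left(\frac{y^6}{3}+\frac{y^8}{32}\right)e^{-y^2/2}.
\]
Dividing by $e^{-y^2/2}$ yields $h_N(y)=e^{-y^2/2}\bigl[1-\tfrac{y^4}{4N}+\epsilon_N(y)\bigr]$ with $\sup_{|y|\le M}|\epsilon_N(y)|\le N^{-2}(M^6/3+M^8/32)=o_M(N^{-1})$.

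Next, \iref{cor:ball-normalization-expansion} gives
\[
\frac1{D_N}=\frac1{\sqrt{2\pi}}\Bigl(1+\frac3{4N}+o(N^{-1})\Bigr).
\]
This remainder does not depend on $y$. Multiplying the two expansions gives
\[
p_n(y)=\varphi(y)\Bigl(1-\frac{y^4}{4N}+\epsilon_N(y)\Bigr)\Bigl(1+\frac{3}{4N}+o(N^{-1})\Bigr).
\]
Expanding the product, the cross term $\frac{3y^4}{16N^2}$ is at most $3M^4/(16N^2)=o_M(N^{-1})$ uniformly on $[-M,M]$. The products involving $\epsilon_N$ or the $o(N^{-1})$ remainder are likewise uniformly $o_M(N^{-1})$, because $1-\frac{y^4}{4N}+\epsilon_N(y)$ is bounded on $[-M,M]$. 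Collecting the order-$N^{-1}$ terms gives $\frac{3-y^4}{4N}$, which is \eqref{eq:ball-local-density-expansion}.

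The one point needing care is that every remainder must be uniform in $|y|\le M$. This is why I use the explicit central-region bound rather than the $L^1$ statement, and it requires $N\ge2M^2$, which holds for all sufficiently large $N$ at fixed $M$. Nothing else is a genuine obstacle.
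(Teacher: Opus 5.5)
Your proposal is correct and follows essentially the same route as the paper. The paper multiplies a uniform expansion $h_N(y)/e^{-y^2/2}=1-\frac{y^4}{4N}+o_M(N^{-1})$ on $[-M,M]$, obtained from \eqref{eq:ball-log-first-order}, by the $y$-independent expansion $\sqrt{2\pi}/D_N=1+\frac{3}{4N}+o(N^{-1})$ from \iref{cor:ball-normalization-expansion}. The only difference is that you quote the explicit central-region bound already proved inside \iref{lem:ball-unnormalized-first-order}, making the first remainder an explicit $O_M(N^{-2})$, whereas the paper re-derives it from $e^{-t}=1-t+O(t^2)$.
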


\begin{proof}
On $|y|\le M$, \eqref{eq:ball-log-first-order} and $e^{-t}=1-t+O(t^2)$ give, uniformly,
\begin{equation}
\frac{h_N(y)}{e^{-y^2/2}}
=1-\frac{y^4}{4N}+o_M(N^{-1})
\qquad (|y|\le M).
\label{eq:ball-hN-local-uniform}
\end{equation}
Moreover, \iref{cor:ball-normalization-expansion} gives $\sqrt{2\pi}/D_N=1+3/(4N)+o(N^{-1})$. Multiplying the two expansions proves \eqref{eq:ball-local-density-expansion}.
\end{proof}

\subsection{First-order comparison under exact-variance scaling}

For standardization by the exact standard deviation, \eqref{eq:ball-two-variances} gives
\begin{align}
 \sigma_n^2&:=\Var_{\nu_n}(\theta)=\frac{\pi^2}{4(N+3)},
 \qquad
 \kappa_n:=\frac{\sigma_n}{\tau_n}=\sqrt{\frac{N}{N+3}},
 \label{eq:ball-exact-variance-scale}\\
 \widehat p_n(z)&:=\kappa_n p_n(\kappa_n z),\qquad z\in\R.
 \label{eq:ball-exact-variance-density}
\end{align}
The latter is the density of $\theta/\sigma_n$ and has mean zero and variance one.

\begin{theorem}
\midlabel{theorem}{thm:ball-exact-variance-first-order}
As \(n\to\infty\),
\begin{equation}
 N(\widehat p_n-\varphi)
 \longrightarrow
 \widehat Q,
 \qquad
 \widehat Q(z):=-\frac14\bigl(z^4-6z^2+3\bigr)\varphi(z),
 \quad\text{in }L^1(\R).
 \label{eq:ball-exact-variance-first-order}
\end{equation}
\end{theorem}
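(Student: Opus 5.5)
Our plan is to deduce the statement from \iref{thm:ball-first-order-density} by a rescaling argument, splitting the error into a ``transported'' part and a ``dilation'' part. Write
\[
N(\widehat p_n-\varphi)(z)
=\kappa_n N\bigl(p_n-\varphi\bigr)(\kappa_n z)
+N\bigl(\kappa_n\varphi(\kappa_n z)-\varphi(z)\bigr).
\]
The two pieces are handled separately, and the candidate limit is $Q+(\text{dilation correction})$, which we then check equals $\widehat Q$.

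\emph{First term.} The map $g\mapsto \kappa g(\kappa\,\cdot)$ is an isometry of $L^1(\R)$ for every $\kappa>0$. Hence
\[
\|\kappa_nN(p_n-\varphi)(\kappa_n\cdot)-\kappa_nQ(\kappa_n\cdot)\|_{L^1}=\|N(p_n-\varphi)-Q\|_{L^1}\to0
\]
by \iref{thm:ball-first-order-density}. It remains to see that $\kappa_nQ(\kappa_n\cdot)\to Q$ in $L^1$, which follows from $\kappa_n\to1$ and continuity of dilations in $L^1$ (or directly by dominated convergence, since $Q$ is a polynomial times a Gaussian and $\kappa_n\in[1/2,1]$ for large $N$). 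So the first term tends to $Q$ in $L^1$.

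\emph{Second term.} Set $\varphi_\kappa(z):=\kappa\varphi(\kappa z)$. Since $\kappa_n=(1+3/N)^{-1/2}=1-\frac{3}{2N}+O(N^{-2})$, we have $N(\kappa_n-1)\to-\tfrac32$. By the mean value theorem in $\kappa$,
\[
N(\varphi_{\kappa_n}-\varphi)=N(\kappa_n-1)\,\partial_\kappa\varphi_\kappa\big|_{\kappa=\xi_n}.
\]
Here $\partial_\kappa\varphi_\kappa(z)=\varphi(\kappa z)(1-\kappa^2z^2)$. Uniformly for $\kappa\in[1/2,1]$ this is dominated by $(1+z^2)e^{-z^2/8}$, and it converges pointwise to $(1-z^2)\varphi(z)$ as $\xi_n\to1$. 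By dominated convergence, the second term converges in $L^1$ to $-\tfrac32(1-z^2)\varphi(z)=\tfrac32H_2(z)\varphi(z)$.

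\emph{Identification of the limit.} Adding the two limits gives
\[
\frac{3-z^4}{4}\varphi+\frac{3(z^2-1)}{2}\varphi=\frac{-z^4+6z^2-3}{4}\varphi=\widehat Q,
\]
as required. This is the Hermite cancellation noted in the introduction: $Q=-\tfrac14H_4\varphi-\tfrac32H_2\varphi$, and the dilation removes exactly the $H_2\varphi$ term. The only delicate point is the uniform domination in the dilation step, which reduces to the simple Gaussian bound above. The first term is routine once one observes that $L^1$ norms are dilation-invariant.
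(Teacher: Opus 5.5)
Your proposal is correct and follows the paper's proof essentially step for step. It uses the same decomposition into the transported term $\kappa_nG_n(\kappa_n z)$ and the Gaussian dilation term, the $L^1$-invariance and continuity of dilations for the first, and the mean value theorem in $\kappa$ with the $(1+z^2)e^{-z^2/8}$ domination on $[1/2,1]$ for the second. The only small point to note is that the intermediate point $\xi_n$ depends on $z$. This is harmless, since it lies between $\kappa_n$ and $1$ for every $z$.
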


\begin{proof}
Put \(G_n:=N(p_n-\varphi)\). By \iref{thm:ball-first-order-density},
\(G_n\to Q\) in \(L^1(\R)\), where \(Q(z)=(3-z^4)\varphi(z)/4\). From \eqref{eq:ball-exact-variance-density},
\begin{equation}
 N\bigl(\widehat p_n(z)-\varphi(z)\bigr)
 =\kappa_nG_n(\kappa_n z)
 +N\bigl[\kappa_n\varphi(\kappa_n z)-\varphi(z)\bigr].
 \label{eq:ball-exact-variance-decomposition}
\end{equation}
For the first term,
\begin{align*}
 \|\kappa_nG_n(\kappa_n\,\cdot)-Q\|_{L^1}
 &\le \|\kappa_n(G_n-Q)(\kappa_n\,\cdot)\|_{L^1}
 +\|\kappa_nQ(\kappa_n\,\cdot)-Q\|_{L^1}\\
 &=\|G_n-Q\|_{L^1}
 +\|\kappa_nQ(\kappa_n\,\cdot)-Q\|_{L^1}
 \longrightarrow0,
\end{align*}
because dilations are continuous in \(L^1(\R)\).

For the second term, define \(T_c(z):=c\varphi(cz)\). Since
\[
 \partial_cT_c(z)=(1-c^2z^2)\varphi(cz),
\]
the difference quotient \((T_c-T_1)/(c-1)\) converges in \(L^1(\R)\) to \((1-z^2)\varphi(z)\) as \(c\to1\). Indeed, for \(c\in[1/2,1]\), the derivative is bounded in absolute value by \(C(1+z^2)e^{-z^2/8}\), which is integrable, so the claim follows from the mean value theorem and dominated convergence.
Moreover,
\[
 N(\kappa_n-1)
 =N\left(\sqrt{\frac{N}{N+3}}-1\right)
 \longrightarrow-\frac32.
\]
Consequently,
\begin{equation}
 N\bigl[\kappa_n\varphi(\kappa_n\,\cdot)-\varphi\bigr]
 \longrightarrow-\frac32(1-z^2)\varphi(z)
 \quad\text{in }L^1(\R).
 \label{eq:ball-gaussian-rescaling-correction}
\end{equation}
Combining the two limits in \eqref{eq:ball-exact-variance-decomposition} gives
\[
 Q(z)-\frac32(1-z^2)\varphi(z)
 =-\frac14(z^4-6z^2+3)\varphi(z),
\]
which proves \eqref{eq:ball-exact-variance-first-order}.
\end{proof}

\begin{corollary}
\midlabel{corollary}{cor:ball-exact-variance-consequences}
Let \(\widehat F_n(x):=\int_{-x}^{x}\widehat p_n(z)\dd z\). For every fixed \(x\ge0\),
\begin{equation}
 N\left[\widehat F_n(x)-\bigl(2\Phi(x)-1\bigr)\right]
 \longrightarrow
 \widehat H(x):=\frac{x^3-3x}{2}\varphi(x).
 \label{eq:ball-exact-variance-local-mass}
\end{equation}
Furthermore, with
\begin{equation}
 b_-:=\sqrt{3-\sqrt6},
 \qquad
 b_+:=\sqrt{3+\sqrt6},
 \label{eq:ball-exact-variance-roots}
\end{equation}
one has
\begin{equation}
 N\,d_{\rm TV}(\widehat p_n,\varphi)
 \longrightarrow
 \widehat C_{\rm TV}
 :=\frac{\sqrt6}{2}
 \bigl[b_-\varphi(b_-)+b_+\varphi(b_+)\bigr]
 \approx0.350075038.
 \label{eq:ball-exact-variance-sharp-TV}
\end{equation}
\end{corollary}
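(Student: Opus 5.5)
Both assertions follow from the $L^1$ convergence $N(\widehat p_n-\varphi)\to\widehat Q$ in \iref{thm:ball-exact-variance-first-order}, together with explicit integrals of $\widehat Q$.

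For the interval probabilities, fix $x\ge0$. Then
\[
\Bigl|N\bigl[\widehat F_n(x)-(2\Phi(x)-1)\bigr]-\int_{-x}^{x}\widehat Q\Bigr|
\le\|N(\widehat p_n-\varphi)-\widehat Q\|_{L^1}\to0,
\]
so it suffices to compute $\int_{-x}^x\widehat Q$. The key identity is
\[
\frac{d}{dz}\bigl[(z^3-3z)\varphi(z)\bigr]=(3z^2-3-z^4+3z^2)\varphi(z)=-(z^4-6z^2+3)\varphi(z),
\]
i.e.\ $H_4\varphi=-(H_3\varphi)'$. This gives $\int_0^x\widehat Q=\tfrac14(x^3-3x)\varphi(x)$. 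Since $\widehat Q$ is even, doubling yields $\widehat H(x)=\tfrac12(x^3-3x)\varphi(x)$.

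For total variation, the reverse triangle inequality gives
\[
\bigl|N\|\widehat p_n-\varphi\|_{L^1}-\|\widehat Q\|_{L^1}\bigr|\le\|N(\widehat p_n-\varphi)-\widehat Q\|_{L^1}\to0,
\]
so $N d_{\rm TV}\to\tfrac12\|\widehat Q\|_{L^1}$. It remains to compute $\|\widehat Q\|_{L^1}=2\int_0^\infty|\widehat Q|$.

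The quartic $z^4-6z^2+3$ vanishes at $z^2=3\pm\sqrt6$, so its positive roots are $b_-<b_+$. Thus $\widehat Q$ is negative on $[0,b_-)$, positive on $(b_-,b_+)$, and negative on $(b_+,\infty)$. Write $G(z)=\tfrac14(z^3-3z)\varphi(z)$, an antiderivative of $\widehat Q$ with $G(0)=0$ and $G(\infty)=0$. Summing over the three sign intervals,
\[
\int_0^\infty|\widehat Q|=-G(b_-)+\bigl(G(b_+)-G(b_-)\bigr)-\bigl(0-G(b_+)\bigr)=2\bigl[G(b_+)-G(b_-)\bigr].
\]
Hence $\tfrac12\|\widehat Q\|_{L^1}=2[G(b_+)-G(b_-)]$.

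To simplify, use $b^3-3b=b(b^2-3)$ with $b_\pm^2-3=\pm\sqrt6$. Then $G(b_\pm)=\pm\tfrac{\sqrt6}{4}b_\pm\varphi(b_\pm)$, and therefore
\[
2\bigl[G(b_+)-G(b_-)\bigr]=\tfrac{\sqrt6}{2}\bigl[b_+\varphi(b_+)+b_-\varphi(b_-)\bigr]=\widehat C_{\rm TV}.
\]
The numerical value $\approx0.350075$ is then a direct evaluation.

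The only real care needed is the sign bookkeeping and the boundary values $G(0)=0$ and $G(\infty)=0$.
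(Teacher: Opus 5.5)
Your proof is correct and follows essentially the same route as the paper. Both derive the two limits from the $L^1$ convergence $N(\widehat p_n-\varphi)\to\widehat Q$, using the antiderivative $\frac14(z^3-3z)\varphi(z)$ of $\widehat Q$ and its sign changes at $\pm b_\pm$. The only cosmetic difference is in the total-variation computation: the paper uses $\int_\R\widehat Q=0$ to reduce $\frac12\|\widehat Q\|_{L^1}$ to $2\int_{b_-}^{b_+}\widehat Q$, whereas you sum over the three sign intervals on $[0,\infty)$ with $G(0)=0$ and $G(z)\to0$ as $z\to\infty$, which amounts to the same thing.
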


\begin{proof}
The preceding \(L^1\)-convergence permits integration over \([-x,x]\) and also gives
\[
 N\,d_{\rm TV}(\widehat p_n,\varphi)
 \longrightarrow\frac12\int_\R|\widehat Q(z)|\dd z.
\]
Since
\begin{equation}
 \widehat Q(z)
 =\frac14\frac{\dd}{\dd z}\bigl[(z^3-3z)\varphi(z)\bigr],
 \label{eq:ball-exact-variance-antiderivative}
\end{equation}
integration over \([-x,x]\) proves \eqref{eq:ball-exact-variance-local-mass}. The polynomial \(z^4-6z^2+3\) vanishes at \(\pm b_-\) and \(\pm b_+\). Hence \(\widehat Q\) is positive exactly when \(b_-<|z|<b_+\). Since \(\int_\R\widehat Q(z)\dd z=0\),
\begin{align*}
 \frac12\int_\R|\widehat Q(z)|\dd z
 &=2\int_{b_-}^{b_+}\widehat Q(z)\dd z\\
 &=\frac12\bigl[(z^3-3z)\varphi(z)\bigr]_{b_-}^{b_+}\\
 &=\frac{\sqrt6}{2}
 \bigl[b_-\varphi(b_-)+b_+\varphi(b_+)\bigr],
\end{align*}
which is \eqref{eq:ball-exact-variance-sharp-TV}.
\end{proof}

Since $\widehat H$ has its minimum at $b_-$ and maximum at $b_+$, $\widehat C_{\rm TV}=\max_{x\ge0}\widehat H(x)-\min_{x\ge0}\widehat H(x)$, rather than $\max_{x\ge0}|\widehat H(x)|$.

\subsection{Local masses, total variation, and the density crossing}

\begin{theorem}
\midlabel{theorem}{thm:ball-local-mass-correction}
For every fixed \(x\ge0\),
\begin{equation}
N\left[F_n(x)-\bigl(2\Phi(x)-1\bigr)\right]
\longrightarrow H(x):=\frac{x^3+3x}{2}\varphi(x).
\label{eq:ball-local-mass-correction}
\end{equation}
\end{theorem}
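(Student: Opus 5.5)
The plan is to integrate the $L^1$ expansion of \iref{thm:ball-first-order-density} over the interval $[-x,x]$, in the same way as in the proof of \iref{cor:ball-exact-variance-consequences}. Fix $x\ge0$. By the change of variables recorded in \Cref{sec:ball}, $F_n(x)=\int_{-x}^{x}p_n(y)\dd y$, and $2\Phi(x)-1=\int_{-x}^{x}\varphi(y)\dd y$. Hence
\[
\left|N\bigl[F_n(x)-(2\Phi(x)-1)\bigr]-\int_{-x}^{x}Q(y)\dd y\right|
\le\int_{-x}^{x}\bigl|N(p_n-\varphi)-Q\bigr|\dd y
\le\bigl\|N(p_n-\varphi)-Q\bigr\|_{L^1(\R)}.
\]
The right-hand side tends to zero by \iref{thm:ball-first-order-density}. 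Since this bound does not depend on $x$, the convergence is in fact uniform in $x\ge0$.

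It remains to compute $\int_{-x}^{x}Q$ in closed form. I would look for an antiderivative of the form $P(y)\varphi(y)$. Using $\varphi'=-y\varphi$, we get
\[
\frac{\dd}{\dd y}\bigl[(y^3+3y)\varphi(y)\bigr]
=\bigl(3y^2+3-y^4-3y^2\bigr)\varphi(y)=(3-y^4)\varphi(y)=4Q(y).
\]
Therefore $Q=\frac14\frac{\dd}{\dd y}\bigl[(y^3+3y)\varphi(y)\bigr]$. The function $(y^3+3y)\varphi$ is odd, so
\[
\int_{-x}^{x}Q(y)\dd y=\frac14\cdot2(x^3+3x)\varphi(x)=\frac{x^3+3x}{2}\varphi(x)=H(x),
\]
which gives the claimed limit.

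There is no serious obstacle, because all the analytic work is contained in \iref{thm:ball-first-order-density}; the only step that needs care is checking the antiderivative identity. As a consistency check, $H(0)=0$ and $H(x)\to0$ as $x\to\infty$, which agrees with $\int_\R Q=0$. This is the integrated form of the normalization expansion in \iref{cor:ball-normalization-expansion}.
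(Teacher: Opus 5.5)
Your proposal is correct and follows essentially the same route as the paper. Both integrate the $L^1$ convergence $N(p_n-\varphi)\to Q$ over $[-x,x]$ and evaluate $\int_{-x}^{x}Q$ using the antiderivative $\frac14(y^3+3y)\varphi(y)$; your added remark that the convergence is uniform in $x\ge0$ is a valid small strengthening that the paper does not state.
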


\begin{proof}
By \iref{thm:ball-first-order-density}, the left-hand side converges to $\int_{-x}^{x}Q(y)\dd y$. Since $[(y^3+3y)\varphi(y)]'=(3-y^4)\varphi(y)$,
\[
\int_{-x}^{x}Q(y)\dd y
=\frac14\bigl[(y^3+3y)\varphi(y)\bigr]_{-x}^{x}
=\frac{x^3+3x}{2}\varphi(x).\qedhere
\]
\end{proof}

The derivative \(H'(x)=\frac12(3-x^4)\varphi(x)\) shows that \(H\) has its unique maximum at
\begin{equation}
a:=3^{1/4}.
\label{eq:ball-a}
\end{equation}

\begin{theorem}
\label{thm:ball-sharp-TV}
As \(n\to\infty\),
\begin{equation}
N\,d_{\rm TV}(p_n,\varphi)
\longrightarrow C_{\rm TV}:=\frac{a^3+3a}{2}\varphi(a),
\qquad a=3^{1/4}.
\label{eq:ball-sharp-TV}
\end{equation}
Numerically, \(C_{\rm TV}\approx0.522516163\).
\end{theorem}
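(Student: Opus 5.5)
The plan mirrors the proof of \iref{cor:ball-exact-variance-consequences}, with \iref{thm:ball-first-order-density} in place of \iref{thm:ball-exact-variance-first-order}.

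\emph{Step 1 (reduction to an integral).} Since $N(p_n-\varphi)\to Q$ in $L^1(\R)$, the reverse triangle inequality gives
\[
\Bigl|\int_\R|N(p_n-\varphi)|\dd y-\int_\R|Q|\dd y\Bigr|\le\|N(p_n-\varphi)-Q\|_{L^1}\longrightarrow0 .
\]
Hence $N\,d_{\rm TV}(p_n,\varphi)\to\frac12\int_\R|Q(y)|\dd y$, and the statement reduces to computing this integral.

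\emph{Step 2 (computing the integral).} The sign of $Q(y)=\frac{3-y^4}{4}\varphi(y)$ is that of $3-y^4$. So $Q>0$ for $|y|<a$ and $Q<0$ for $|y|>a$, where $a=3^{1/4}$. Because $p_n$ and $\varphi$ are probability densities, $\int_\R Q=0$; this can also be checked from $\int_\R y^4\varphi=3$. Therefore the positive and negative parts of $Q$ have equal mass, and
\[
\frac12\int_\R|Q|\dd y=\int_{-a}^{a}Q(y)\dd y .
\]

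\emph{Step 3 (closed form).} By the antiderivative computation in the proof of \iref{thm:ball-local-mass-correction},
\[
\int_{-a}^{a}Q(y)\dd y=H(a)=\frac{a^3+3a}{2}\varphi(a).
\]
This equals $C_{\rm TV}$. Equivalently, $C_{\rm TV}=\max_{x\ge0}H(x)$, which is consistent with the remark that $a$ is the unique maximizer of $H$.

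\emph{Step 4 (numerics).} The value follows from $a\approx1.31607$ and $\varphi(a)=(2\pi)^{-1/2}e^{-\sqrt3/2}$.

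There is no real obstacle. The only points needing care are the reverse-triangle step, which passes the $L^1$ convergence through the absolute value, and the use of $\int_\R Q=0$ to fold the two tails into the central integral.
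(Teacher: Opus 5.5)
Your proposal is correct and follows the paper's proof essentially step for step. You pass the $L^1$ convergence of \Cref{thm:ball-first-order-density} through the absolute value, then use $\int_\R Q=0$ and the sign change of $Q$ at $a=3^{1/4}$ to reduce $\frac12\int_\R|Q|$ to $\int_{-a}^{a}Q=\frac14\bigl[(y^3+3y)\varphi(y)\bigr]_{-a}^{a}$; your explicit reverse-triangle step and numerical check are only small elaborations of what the paper leaves implicit.
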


\begin{proof}
\iref{thm:ball-first-order-density} implies
\[
N\,d_{\rm TV}(p_n,\varphi)
=\frac12\int_\R|N(p_n-\varphi)|\dd y
\longrightarrow\frac12\int_\R|Q(y)|\dd y.
\]
Now $\int_\R Q=0$, and $Q$ is positive exactly on $(-a,a)$. Hence
\[
 \frac12\int_\R|Q(y)|\dd y=\int_{-a}^{a}Q(y)\dd y
 =\frac14\bigl[(y^3+3y)\varphi(y)\bigr]_{-a}^{a}
 =\frac{a^3+3a}{2}\varphi(a).\qedhere
\]
\end{proof}

The density crossing can also be determined exactly. For \(0\le y<\sqrt N\), let \(R_N(y):=\log(p_n(y)/\varphi(y))\). Since
\[
R_N(y)=\frac N2\log\!\left(1-\frac{y^2}{N}\right)-\log D_N+\frac12\log(2\pi)+\frac{y^2}{2},
\]
direct differentiation gives
\begin{equation}
R_N'(y)=-\frac{y^3}{N-y^2}<0
\qquad (0<y<\sqrt N).
\label{eq:ball-density-ratio-derivative}
\end{equation}
Since \(\log(1-y^2/N)<-y^2/N\) for
\(0<|y|<\sqrt N\), we have \(D_N<\sqrt{2\pi}\), so \(R_N(0)>0\),
whereas \(R_N(y)\to-\infty\) as \(y\uparrow\sqrt N\). Thus there is a
unique \(r_n\in(0,\sqrt N)\) satisfying
\begin{equation}
p_n(r_n)=\varphi(r_n).
\label{eq:ball-crossing-equation}
\end{equation}
The sign of $p_n-\varphi$ is positive on $(-r_n,r_n)$ and negative outside. For any $\varepsilon\in(0,a)$, \eqref{eq:ball-local-density-expansion} gives $p_n(a-\varepsilon)>\varphi(a-\varepsilon)$ and $p_n(a+\varepsilon)<\varphi(a+\varepsilon)$ for large $n$. Hence $a-\varepsilon<r_n<a+\varepsilon$, so
\begin{equation}
r_n\longrightarrow3^{1/4}.
\label{eq:ball-crossing-limit}
\end{equation}
The zero integral and this sign pattern give the exact identity
\begin{equation}
d_{\rm TV}(p_n,\varphi)
=F_n(r_n)-\bigl(2\Phi(r_n)-1\bigr)
=I_{r_n^2/N}\!\left(\frac12,\frac N2+1\right)-\bigl(2\Phi(r_n)-1\bigr).
\label{eq:ball-exact-TV}
\end{equation}

Combining \eqref{eq:ball-gn-local-exact} with \iref{thm:ball-local-mass-correction} gives, for every fixed \(x\ge0\),
\begin{equation}
N\bigl[f_n(\tau_nx)-g_n(\tau_nx)\bigr]
\longrightarrow\frac{x^3+3x}{2}\varphi(x).
\label{eq:ball-profile-gaussian-first-order}
\end{equation}
Hence \(g_n(\tau_nx)\) is the leading term of the local mass, and \(H(x)/N\) is its first correction.








\begin{thebibliography}{99}

\bibitem{BrehmVoigt2000}
U.~Brehm and J.~Voigt,
\emph{Asymptotics of cross sections for convex bodies},
Beitr\"age Algebra Geom. \textbf{41} (2000), no.~2, 437--454.

\bibitem{DuembgenSamworthWellner2021}
L.~D\"umbgen, R.~J.~Samworth, and J.~A.~Wellner,
\emph{Bounding distributional errors via density ratios},
Bernoulli \textbf{27} (2021), no.~2, 818--852.
\newblock \href{https://doi.org/10.3150/20-BEJ1256}{doi:10.3150/20-BEJ1256}.

\bibitem{EldanKlartag2008}
R.~Eldan and B.~Klartag,
\emph{Pointwise estimates for marginals of convex bodies},
J. Funct. Anal. \textbf{254} (2008), no.~8, 2275--2293.

\bibitem{HenziDuembgenExtended2023}
A.~Henzi and L.~D\"umbgen,
\emph{Various new inequalities for beta distributions},
arXiv:2202.06718v9, 2023.

\bibitem{Ouimet2022}
F.~Ouimet,
\emph{A multivariate normal approximation for the Dirichlet density and some applications},
Stat \textbf{11} (2022), no.~1, Paper No.~e410, 13 pp.

\bibitem{Sodin2007}
S.~Sodin,
\emph{Tail-sensitive Gaussian asymptotics for marginals of concentrated measures in high dimension},
in \emph{Geometric Aspects of Functional Analysis: Israel Seminar 2004--2005},
Lecture Notes in Math., vol.~1910, Springer, Berlin, 2007, pp.~271--295.

\end{thebibliography}
\end{document}